\documentclass[oneside,english]{amsart}
\usepackage[T1]{fontenc}
\usepackage[latin9]{inputenc}
\usepackage{geometry}
\geometry{verbose,tmargin=2cm,bmargin=2cm,lmargin=2cm,rmargin=2cm}
\usepackage{amsthm}
\usepackage{hyperref}
\usepackage{tikz}
\usepackage{kbordermatrix}
\usepackage{float}
 \usepackage{setspace}


\makeatletter
\numberwithin{equation}{section}
\numberwithin{figure}{section}
\theoremstyle{plain}
\newtheorem{thm}{\protect\theoremname}
\theoremstyle{plain}

\theoremstyle{plain}

\theoremstyle{plain}
\newtheorem{lem}[thm]{\protect\lemmaname}
\theoremstyle{plain}

\theoremstyle{plain}
\newtheorem{cor}[thm]{\protect\corollaryname}
\theoremstyle{plain}
\newtheorem{definition}[thm]{\protect\definitionname}
\theoremstyle{plain}

\makeatother

\usepackage{babel}
\providecommand{\conjecturename}{Conjecture}
\providecommand{\lemmaname}{Lemma}
\providecommand{\theoremname}{Theorem}
\providecommand{\problemname}{Problem}
\providecommand{\propositionname}{Proposition}
\providecommand{\corollaryname}{Corollary}
\providecommand{\definitionname}{Definition}
\providecommand{\examplename}{Example}
\begin{document}

\title{The Inverse Eigenvalue Problem for Linear Trees}

\author{Charles R. Johnson$^\dag$ and Tanay Wakhare$^\ast$}

\thanks{{\scriptsize
\hskip -0.4 true cm MSC(2010): Primary: 05C50; Secondary: 15B57.
\newline Keywords and phrases: Degree Conjecture; Diameter; Implicit function theorem; Inverse Eigenvalue Problem; Linear tree; Multiplicity list.\\
$^\dag$Corresponding author, 
}}

\address{$^\ast$~Department of Electrical Engineering and Computer Science, MIT, Cambridge, MA 02139}
\email{twakhare@mit.edu}
\address{$^\dag$~Department of Mathematics, College of William and Mary, Williamsburg, VA 23185}
\email{crjohn@wm.edu}

\maketitle

\begin{abstract}
We prove the sufficiency of the Linear Superposition Principle for linear trees, which characterizes the spectra achievable by a real symmetric matrix whose underlying graph is a linear tree. The necessity was previously proven in \cite{TwoAndrews}. This is the most general class of trees for which the inverse eigenvalue problem has been solved. We explore many consequences, including the Degree Conjecture for possible spectra, upper bounds for the minimum number of eigenvalues of multiplicity $1$, and the equality of the diameter of a linear tree and its minimum number of distinct eigenvalues.
\end{abstract}

\section{Introduction}

Let $G$ be an undirected graph on $n$ vertices, and denote by $\mathcal{S}(G)$ the set of all $n$-by-$n$ real symmetric matrices, the graph of whose nonzero off-diagonal entries is $G$. By convention, $G$ places no restriction on the diagonal entries of $A \in \mathcal{S}(G)$, other than that they be real. Each element of $\mathcal{S}(G)$ has an ordered list of eigenvalues, including multiplicities, and, thus, $\mathcal{S}(G)$ exhibits a catalog of multiplicity lists occurring among the matrices in $\mathcal{S}(G)$. These may be presented as unordered lists (partitions of $n$) or as ordered lists that respect the order of the underlying eigenvalues. The former are denoted by $\mathcal{L}(G)$ and the latter as $\mathcal{L}_0(G)$. It has long been a goal to understand these lists as a function of $G$, i.e., to understand how the structure of $G$ limits the multiplicities of the eigenvalues of matrices in $\mathcal{S}(G)$. For several reasons, interest has focused upon the case in which $G=T$, a tree. The problem of multiplicities is still far from settled there. The recent book \cite{Johnson3} describes the background and most known works on this subject (including generalizations beyond symmetric matrices and trees). See this reference for any terminology or notation not defined herein.


\begin{center}
\begin{figure}[!ht]
\centering
\def\r{1.9}
\def\k{6pt}
\begin{tikzpicture}[scale=0.7,
rel/.style={circle, draw, only marks, mark=*, mark options={fill=red},mark size=\k},
bor/.style={rectangle,draw,rounded corners=0.6ex, minimum size=4pt, inner sep=5pt},
coo/.style={circle, draw, only marks, mark=*, fill=red, minimum size=\k+3pt},
]
\foreach \x/\xtext in {1,2,3}{
\draw (0pt,0pt) -- ({\r*cos(\x*2*pi/3 r)}, {\r*sin(\x*2*pi/3 r)});}
\draw plot[only marks, mark=*, mark options={fill=red},mark size=\k] 
coordinates{(0,0)} node[] {};
\foreach \y/\xtext in {1,2,3}{
\draw[shift={({\r*cos(2*pi/3 r)}, {\r*sin(2*pi/3 r)})}]
(0pt,0pt) -- ++({\r*cos((2*pi/3+pi/4*(\y-2)) r)}, {\r*sin((2*pi/3+pi/4*(\y-2)) r)}) 
plot[only marks, mark=*, mark options={fill=white},mark size=\k] 
coordinates{({\r*cos((2*pi/3+pi/4*(\y-2)) r)}, {\r*sin((2*pi/3+pi/4*(\y-2)) r)})};}
\foreach \y/\xtext in {1,2,3}{
\draw[shift={({\r*cos(4*pi/3 r)}, {\r*sin(4*pi/3 r)})}] (0pt,0pt) 
-- ++({\r*cos((4*pi/3+pi/4*(\y-2)) r)}, {\r*sin((4*pi/3+pi/4*(\y-2)) r)})
plot[only marks, mark=*, mark options={fill=white},mark size=\k] 
coordinates{({\r*cos((4*pi/3+pi/4*(\y-2)) r)}, {\r*sin((4*pi/3+pi/4*(\y-2)) r)})};}
\foreach \y/\xtext in {1,2,3}{
\draw[shift={({\r*cos(0 r)}, {\r*sin(0 r)})}] (0pt,0pt) 
-- ++({\r*cos((pi/4*(\y-2)) r)}, {\r*sin((pi/4*(\y-2)) r)})
plot[only marks, mark=*, mark options={fill=white},mark size=\k] 
coordinates{({\r*cos((pi/4*(\y-2)) r)}, {\r*sin((pi/4*(\y-2)) r)})};}
\foreach \x/\xtext in {1,2,3}{
\draw plot[only marks, mark=*, mark options={fill=red},mark size=\k] 
coordinates{({\r*cos(\x*2*pi/3 r)}, {\r*sin(\x*2*pi/3 r)})};}
\end{tikzpicture}
\hspace{1cm}
\begin{tikzpicture}[scale=0.7,
bor/.style={circle, draw, only marks, mark=*, fill=white,mark size=\k},
rel/.style={rectangle,draw,rounded corners=0.6ex,  fill=white, minimum size=\k},
coo/.style={circle, draw, only marks, mark=*, fill=red, minimum size=\k+3pt},
]
\draw (-2*\r,0) node[bor]{}-- (0,0) node[coo]{}-- (0,-\r)node[bor]{}  --(0,\r)node[bor]{} --(0,2*\r)node[bor]{} --(0,0) -- (\r,0)node[bor]{} --(3*\r,0)node[bor]{};
\foreach \x/\xtext in {3,5}{
\draw[shift={(-\r,0)}] (0pt,0pt) node[coo]{} -- ({\r*cos(\x*pi/4 r)}, {\r*sin(\x*pi/4 r)})
node[bor]{};}
\foreach \x/\xtext in {-1,1}{
\draw[shift={(2*\r,0)}] (0pt,0pt) node[coo]{}-- ({\r*cos(\x*pi/4 r)}, {\r*sin(\x*pi/4 r)})
node[bor]{};
}
\end{tikzpicture}

\caption{A nonlinear and a linear tree on 13 vertices, with HDV's in red.}
\label{fig:10-NL-tree}
\end{figure}
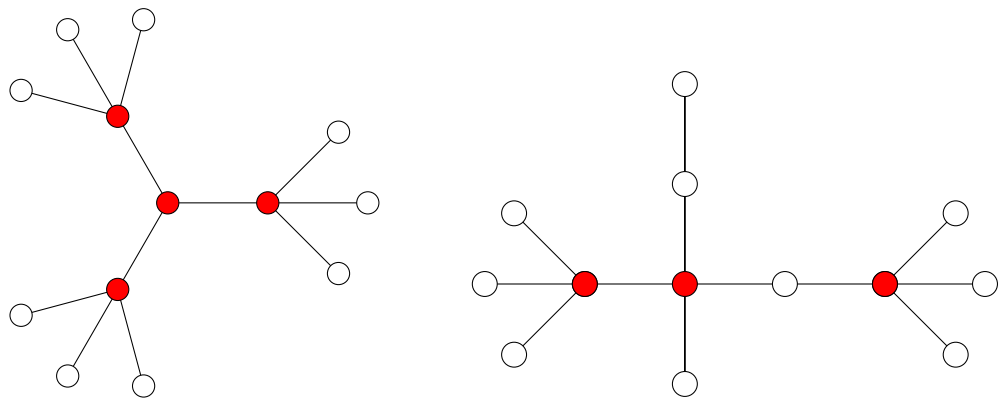
\end{center}

In \cite{TwoAndrews}, the notion of a \textit{linear tree} was introduced. A \textit{high degree vertex} (HDV) in a tree is a vertex of degree $\geq 3$, and a tree is called linear if all HDV's lie on a single induced path. A linear tree with $k$ HDV's is called $k$-linear. An example of a $3$-linear tree is given in Figure \ref{fig:10-NL-tree}. All trees on fewer than $10$ vertices are linear, but although the number of linear trees grows rapidly with $n$, eventually the fraction of trees which are linear goes to $0$ \cite{Wakhare}. However, many small trees are linear; for instance, $37.2\%$ of the $104,636,890$ nonisomorphic trees on $25$ vertices are linear. The purpose of introducing linear trees in \cite{TwoAndrews} was to study the multiplicity list problem. Since not only multiplicity lists, but also the associated inverse eigenvalue problem (IEP), were fully understood for paths and generalized stars \cite{Johnson1, Johnson3}, and any linear tree may be canonically decomposed into these, a superposition principle (generalizing that of \cite{Johnson1}) was introduced to generate the spectra of linear trees. This Linear Superposition Principle (LSP) was shown to produce all multiplicity lists that could possibly occur for a linear tree. It was also shown, using the implicit function theorem (IFT), that for certain subclasses of linear trees, all lists that the LSP produces do occur \cite{TwoAndrews}. This left the important question of whether the LSP produces exactly $\mathcal{L}_0(T)$ for every linear tree $T$. Here, we answer that question affirmatively, settling a major portion of the multiplicity list problem. This is based upon an innovative way to apply the implicit function theorem, using some important facts about the symmetric inverse eigenvalue problem for paths. A corollary of our main result is that, for linear trees, the multiplicity list problem and IEP are equivalent. This is not always the case for nonlinear trees \cite{Johnson3}. 

In Section \ref{sec:background} we provide some necessary background. Section \ref{sec3} contains our main result: every multiplicity list generated by the LSP can be achieved by a linear tree. In Section \ref{sec4} we explore several consequences of our main result, and in Section \ref{sec5} we prove that all linear trees have diameter equal to their minimum number of distinct eigenvalues.

\section{Background}\label{sec:background}
A \textit{high degree vertex} (HDV) in a tree is a vertex of degree $\geq 3$, and a tree is called \textit{linear} if all HDV's lie on a single induced path \cite{TwoAndrews}. Such a path is called a \textit{central path}, and we assume that a particular one has been identified. A \textit{generalized star} is a tree with at most one HDV; equivalently, it has a central vertex (which is arbitrary in the case of paths), to which arbitrarily long arms are appended. An arm is a path of at least one vertex, not including the central vertex. Each of the HDV's in a linear tree is the center vertex of an induced generalized star; assume there are $k$ of them. Since they lie on the central path, they are connected by paths $s_1,s_2,\ldots,s_{k-1}$, some of them possibly ``empty" (i.e., a single edge, whose vertices are consecutive generalized star centers). So, we may view a linear tree as $T_1,s_1,T_2,s_2,\ldots, s_{k-1},T_k$, in which $s_i$ is a path of length $|s_i|$ and $T_i$ is a generalized star with a center on the central path and arms of length $\ell_{i1},\dots,\ell_{ih}$, measured in vertices. For example, from left to right, $|s_1| = 0$ and $|s_2|=1$ in the linear tree of Figure \ref{fig:10-NL-tree}. For notational convenience, we will occasionally drop the absolute value if the context is unambiguous. If a star is a path, we take its center to be an interior vertex.

We require several concepts from the theory of partitions. A \textit{partition} of a positive integer $n$ is a decomposition of $n$ into a sum of positive integers $\eta_1,\ldots,\eta_k$ such that $\eta_1 \geq \cdots \geq \eta_k$ and $\sum_{i=1}^k \eta_i = n$. Given a partition, recall its \textit{Ferrers diagram} is a left justified array of boxes in which the $i$-th row has $\eta_i$ boxes. Transposition of a Ferrers diagram gives the {dual partition}, denoted by $(\eta_1,\ldots,\eta_k)^*$. Note that the number of parts in a partition and its dual may differ. There is also natural partial ordering on the set of partitions called \textit{majorization}. The partition $\eta=(\eta_1,\ldots,\eta_k)$ is majorized by the partition $\nu =(\nu_1,\ldots,\nu_r)$,  denoted by $\eta \preceq \nu$, if they satisfy the inequalities $\eta_1\leq \nu_1, \eta_1+\eta_2 \leq \nu_1+\nu_2, \ldots, \sum_{i=1}^k \eta_i \leq \sum_{i=1}^r \nu_i$. We require the last inequality to be an equality, and interpret $\eta_i,\nu_i = 0$ if the index exceeds the length of the respective vector.

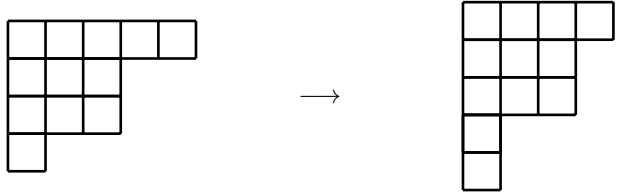
\begin{figure}
\begin{center}
\begin{minipage}{50mm}
\begin{center}
\begin{tikzpicture}[scale=0.5, line width=1pt]
  \draw (0,0) grid (5,1);
  \draw (0,0) grid (3,-1);
  \draw (0,-1) grid (3,-2);
  \draw (0,-2) grid (1,-3);
\end{tikzpicture}
\end{center}
\end{minipage}
$\longrightarrow$
\begin{minipage}{50mm}
\begin{center}
\begin{tikzpicture}[scale=0.5, line width=1pt]
  \draw (0,0) grid (4,1);
  \draw (0,0) grid (3,-1);
  \draw (0,-1) grid (3,-2);
  \draw (0,-2) grid (1,-3);
  \draw (0,-2) grid (1,-4);
\end{tikzpicture}
\end{center}
\end{minipage}
\end{center}
\caption{Ferrers diagrams of $12 = 5+3+3+1$ and its dual $12 = 4 + 3 + 3+1+1$ }
\end{figure}

Let $A(v)$ denote the principal submatrix of a square matrix $A$ obtained by removing the row and column corresponding to index/vertex $v$, $A[T_i]$ the principal submatrix of $A$ corresponding to vertices lying in the subtree $T_i$, $\sigma(A)$ denote the spectrum of $A$, and $m_A(\lambda)$ the multiplicity of the eigenvalue $\lambda$ in the matrix $A$. An eigenvalue is \textit{upward} relative to index/vector $v$ and matrix $A$ if $m_{A(v)}(\lambda) = m_A(\lambda)+1$.

We use $\hat{\mathcal{L} }_0(T)$ to denote the multiplicity lists in which upward multiplicities for $v$ are distinguished with hats. This results in a multiplicity list mixing upward and non-upward eigenvalues. Note that eigenvalues with upward multiplicity $\hat{0}$ are allowed. For a generalized star, it is implicit that $\hat{\mathcal{L} }_0(T)$ will always denote upward multiplicities with respect to the removal of the central vertex.


\begin{thm} \cite{Johnson1}\label{gstarmult}
Let $T$ be a generalized star on $n$ vertices with central vertex $v$ of degree $k$ and arm lengths $l_1\geq \dots \geq l_k$. Then $\hat{q} =  {(q_1,\cdots,q_r)} \in \hat{\mathcal{L}}_0(T)$ if and only if $\hat{q}$ satisfies the following conditions:
\begin{enumerate}
\item $q_i$ is a nonnegative integer, $1\leq i\leq r,$ and $\sum_{i=1}^r q_i =n$;
\item if $q_i$ is an upward multiplicity in $\hat{q}$, then $1<i<r$ and neither $q_{i-1}$ nor $q_{i+1}$ is an upward multiplicity in $\hat{q}$;
\item $(q_{i_1}+1,\ldots,q_{i_h}+1)_e \preceq (l_1,\ldots,l_k)^*$, in which $q_{i_1}\geq \cdots \geq q_{i_h}$ are the upward multiplicities of $\hat{q}$ greater than $1$, and $(q_{i_1}+1,\ldots,q_{i_h}+1)_e$ means that the vector is augmented with $e$ ones so that $e+\sum_{m=1}^h (q_{i_m}+1) = \sum_{m=1}^k l_m.  $
\end{enumerate} 
\end{thm}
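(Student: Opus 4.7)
The plan is to prove both directions of the characterization separately, with necessity resting on Cauchy interlacing plus the Parter--Wiener theorem, and sufficiency on an explicit construction that exploits the full realizability of arbitrary simple spectra by tridiagonal (path) matrices. Condition (1) is merely the observation that multiplicities of eigenvalues partition $n$, so I focus on (2) and (3).

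For necessity, suppose $\hat{q} \in \hat{\mathcal{L}}_v(T)$ is realized by some $A \in \mathcal{S}(T)$. By Cauchy interlacing, deletion of the single vertex $v$ shifts the multiplicity of any eigenvalue by at most one in either direction. If $q_i$ is upwards then, relative to the ordered spectrum of $A$, both the eigenvalues immediately above and below the $q_i$-block in $A(v)$ must lie in the corresponding interlacing intervals; this rules out $q_{i-1}$ or $q_{i+1}$ being upwards as well, and also rules out $i=1$ or $i=r$, since in those extremal positions there is no "room" on one side for the extra copy of $\lambda$ to appear in $A(v)$. This gives (2). For (3), observe that $A(v)$ is block-diagonal with one irreducible tridiagonal block for each of the $k$ arms. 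Since irreducible tridiagonal matrices have simple spectrum, each arm contributes at most one copy of a given eigenvalue $\lambda$. For each upwards eigenvalue, the Parter--Wiener theorem gives $m_{A(v)}(\lambda)=m_A(\lambda)+1=q_{i_j}+1$, so $\lambda$ appears in at least $q_{i_j}+1$ arms. Ordering the upwards multiplicities in decreasing order and counting arms of length $\ge j$ (which is exactly what the $j$-th entry of the dual partition $(l_1,\ldots,l_k)^*$ records) translates this system of lower bounds precisely into the majorization inequality of (3).

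For sufficiency, the approach is constructive. Given $\hat{q}$ satisfying (1)--(3), I would first select, for each upwards eigenvalue $\lambda_{i_j}$ (with multiplicity $q_{i_j}$ in $A$), a specific set of $q_{i_j}+1$ arms in which $\lambda_{i_j}$ will appear as a single eigenvalue; the majorization in (3) guarantees that such a selection is possible by a Hall-type matching argument applied to the dual partition. Next, for each individual arm of length $l_i$, I invoke the classical fact that every prescribed set of $l_i$ distinct real numbers is the spectrum of some irreducible tridiagonal matrix, which lets me build each arm's submatrix with exactly the eigenvalues it has been assigned (upwards eigenvalues, plus filler eigenvalues chosen to interlace properly with the target spectrum of $A$). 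This fixes $A(v)$. Finally, I choose the central diagonal entry $A_{vv}$ and the $k$ bond weights from $v$ to its arms so that the global spectrum of $A$ is the prescribed list; the secular equation expresses each non-upwards eigenvalue of $A$ as the solution of a rational equation in $A_{vv}$ and the bond-weight parameters, and a parameter-counting argument (or, in the presence of degeneracies, the implicit function theorem technique referenced in the introduction) shows that enough freedom remains to match all non-upwards multiplicities.

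The main obstacle is the sufficiency direction, specifically the last gluing step: after the arms are fixed, one must verify that the map from the remaining parameters $(A_{vv}, \text{bond weights})$ to the non-upwards portion of the spectrum is surjective onto the required target. Condition (2) (no adjacent upwards multiplicities, and none at the extremes) is what ensures that the interlacing constraints between the chosen arm eigenvalues and the desired non-upwards eigenvalues are consistent, so that such a parameter choice exists; making this genericity precise, and handling the case where multiple non-upwards eigenvalues coincide, is the technical heart of the argument.
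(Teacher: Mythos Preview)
This statement is not proved in the paper: it is quoted from \cite{Johnson1} as established background, with no argument supplied here. There is consequently no proof in the present paper to compare your proposal against.

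For what it is worth, your outline tracks the shape of the original argument. One simplification you are missing: in a generalized star, every eigenvalue of multiplicity at least $2$ is automatically upwards at the center $v$, since $v$ is the only vertex of degree $\ge 3$ and Parter--Wiener forces the Parter vertex to be $v$. Hence every non-upwards $q_i$ equals $1$. This disposes of your concern about ``multiple non-upwards eigenvalues coinciding'' and also streamlines condition~(2): the extremal eigenvalues are simple by interlacing, hence non-upwards, and two consecutive upwards eigenvalues would, again by interlacing, force too many copies of both into the spectrum of $A(v)$. The genuine content, as you correctly flag, is the sufficiency gluing step. In \cite{Johnson1} this is handled by an explicit construction rather than an IFT or parameter count: one distributes the upwards eigenvalues among the arms (your Hall-type argument from the majorization condition is the right mechanism), realizes each arm as an irreducible tridiagonal matrix with the assigned spectrum, and then verifies directly via the bordered structure (equivalently, the neighbors formula) that the remaining simple non-upwards eigenvalues can be placed at the prescribed positions by choosing the central diagonal entry and the bond weights. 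Your sketch stops short of carrying out this last verification, and that is where the actual work lies.
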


As a corollary, $\hat{\mathcal{L}}_0(T)$ for a generalized star alternates upward and non-upward eigenvalues \cite[Theorem 9]{TwoAndrews}. Given this fact, which completely characterizes the allowable eigenvalues for a matrix whose graph is a generalized star, \cite{TwoAndrews} proposed the Linear Superposition Principle (LSP) for the possible multiplicity lists in a real symmetric matrix whose graph is a linear tree.

\begin{definition}[LSP] \label{lspstatement}
Let $T_1,\ldots,T_k$ be generalized stars and $s_1,\ldots, s_{k-1}$ nonnegative integers. Given $\hat{b}_i$ an upward multiplicity list for $T_i$ (with respect to the central vertex), $i=1,\ldots,k$, and $\hat{c}_j$ a list of $s_j$ non-upward ones, $j=1,\ldots,k-1$, construct augmented lists $b_i^{+}, i =1,\ldots,k$, and $c_j^{+}, j=1,\ldots,k-1,$ subject to the following conditions:
\begin{enumerate}
\item all $b_i^{+}$'s and $c_j^{+}$'s are the same length;
\item each $b_i^{+}$ and $c_j^{+}$ is obtained from its corresponding $\hat{b}_i$ and $\hat{c}_j$ by inserting non-upward $0$'s;
\item for each $l$, the $l$-th element of the augmented lists, denoted $b_{i,l}^{+}$ and $c_{j,l}^{+}$, are not all non-upward $0$'s;
\item for each $l$, arranging the $b_{i,l}^{+}$'s and $c_{j,l}^{+}$'s in the order $b_{1,l}^{+},c_{1,l}^{+},b_{2,l}^{+}c_{2,l}^{+},\ldots,b_{k,l}^{+}$, there is at least one upward multiplicity between any two non-upward ones.
\end{enumerate}
Then $\sum_{i=1}^k b_i^{+}+\sum_{j=1}^{k-1} c_j^{+}$, where the addition is termwise, is a multiplicity list for $LT(T_1,s_1,\ldots,s_{k-1},T_k)$ generated by the LSP.
\end{definition}
The LSP may be represented in tabular form, as in Table \ref{lps_table}. The LSP is then equivalent to completing the given table so that 
\begin{enumerate}
\item $b_i^+$ is the multiplicity list $\hat{b}_i$ along with some added non-upward zeros;
\item $c_i^+$ contains $s_i$ non-upward ones and the remaining entries are non-upward zeros;
\item no column has all non-upward zeros;
\item if a column contains two non-upward ones, they are separated by an element with upward multiplicity.
\end{enumerate}

\begin{table}
\centering
\begin{tabular}{llllll}
                            & $\lambda_1$             & $\lambda_2$             & $\ldots$ & $\ldots$ & $\lambda_\nu$ \\ \cline{2-6}
\multicolumn{1}{l|}{$b_1^+$}     & \multicolumn{1}{l|}{} & \multicolumn{1}{l|}{} & \multicolumn{1}{l|}{} & \multicolumn{1}{l|}{} & \multicolumn{1}{l|}{} \\ \cline{2-6} 
\multicolumn{1}{l|}{$c_1^+$}     & \multicolumn{1}{l|}{} & \multicolumn{1}{l|}{} & \multicolumn{1}{l|}{} & \multicolumn{1}{l|}{} & \multicolumn{1}{l|}{} \\ \cline{2-6} 
\multicolumn{1}{l|}{$b_2^+$}     & \multicolumn{1}{l|}{} & \multicolumn{1}{l|}{} & \multicolumn{1}{l|}{} & \multicolumn{1}{l|}{} & \multicolumn{1}{l|}{} \\ \cline{2-6} 
\multicolumn{1}{l|}{$c_2^+$}     & \multicolumn{1}{l|}{} & \multicolumn{1}{l|}{} & \multicolumn{1}{l|}{} & \multicolumn{1}{l|}{} & \multicolumn{1}{l|}{} \\ \cline{2-6} 
\multicolumn{1}{c|}{$\vdots$}    & \multicolumn{1}{l|}{} & \multicolumn{1}{l|}{} & \multicolumn{1}{l|}{} & \multicolumn{1}{l|}{} & \multicolumn{1}{l|}{} \\ \cline{2-6} 
\multicolumn{1}{l|}{$c_{k-1}^+$} & \multicolumn{1}{l|}{} & \multicolumn{1}{l|}{} & \multicolumn{1}{l|}{} & \multicolumn{1}{l|}{} & \multicolumn{1}{l|}{} \\ \cline{2-6} 
\multicolumn{1}{l|}{$b_k^+$}     & \multicolumn{1}{l|}{} & \multicolumn{1}{l|}{} & \multicolumn{1}{l|}{} & \multicolumn{1}{l|}{} & \multicolumn{1}{l|}{} \\ \cline{2-6} 
sum                            & $a_1$                 & $a_2$                 & $\ldots$              & $\ldots$              & $a_\nu$              
\end{tabular}
\vspace{5mm}
\caption{The tabular form of the LSP}
\label{lps_table}
\end{table}

We will also use the following theorems from multiplicity theory in the proof of Lemma \ref{lem10} below.
\begin{thm}[Parter--Weiner, etc.]\cite{JohnsonPW} \label{pw} 
Let $T$ be a tree and $A$ a matrix in $\mathcal{S}(T)$. Suppose that there is a vertex $v$ of $T$ and a real number $\lambda$ such that $\lambda \in \sigma(A) \cap \sigma(A(v))$. Then
\begin{enumerate}
\item there is a vertex $u$ of $T$ such that $m_{A(u)}(\lambda) = m_A(\lambda)+1$;
\item if $m_A(\lambda)\geq 2$, then $\lambda \in \sigma(A) \cap \sigma(A(v))$ is automatically satisfied and $u$ may be chosen so that $\deg_T(u)\geq 3$ and so that there are at least three components $T_1$, $T_2$, and $T_3$ of $T \setminus u$ such that $m_{A[T_i]}(\lambda)\geq 1$, $i=1,2,3$; and
\item if $m_A(\lambda)\geq 1$, then $u$ may be chosen so that $\deg_T(u)\geq 2$ and so that there are two components $T_1$ and $T_2$ of $T \setminus u$ such that $m_{A[T_i]}(\lambda)\geq 1$, $i=1,2$.
\end{enumerate}
\end{thm}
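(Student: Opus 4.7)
The plan is to combine Cauchy interlacing for principal submatrices with the branch decomposition of $A(w)$ forced by the tree structure, and then iterate from the given vertex $v$ until one lands on a vertex $u$ where the multiplicity of $\lambda$ strictly increases. First I would record two basic tools. By Cauchy interlacing applied to deletion of any single vertex $w$, one has
$$m_{A(w)}(\lambda)\in\{m_A(\lambda)-1,\,m_A(\lambda),\,m_A(\lambda)+1\};$$
call $w$ a \emph{Parter vertex for $\lambda$} when the last value is attained. Second, since $T$ is a tree, if the neighbors of $w$ lie in branches $T_1,\dots,T_d$, then $A(w)$ is block diagonal with blocks $A[T_i]$, so
$$m_{A(w)}(\lambda)=\sum_{i=1}^{d}m_{A[T_i]}(\lambda).$$
Together, these two facts completely control how the $\lambda$-multiplicity changes under single-vertex deletion.

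For part (1), the hypothesis $\lambda\in\sigma(A)\cap\sigma(A(v))$ gives $m_{A(v)}(\lambda)\geq 1$. If $v$ itself is a Parter vertex we are done with $u=v$. Otherwise the interlacing trichotomy forces $m_{A(v)}(\lambda)\leq m_A(\lambda)$, and the branch formula says some branch $T_i$ at $v$ carries $\lambda$ as an eigenvalue of $A[T_i]$. One then shifts attention into that $T_i$, restricts the analysis to the corresponding principal submatrix, and iterates. Because $T$ is finite and each step strictly decreases the size of the ambient subtree, the procedure must terminate at a Parter vertex $u$.

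For parts (2) and (3), the extra multiplicity assumption gives the flexibility needed to choose $\lambda$-eigenvectors of $A$ with prescribed zero coordinates. When $m_A(\lambda)\geq 1$ the kernel of $A-\lambda I$ is nontrivial, and for any vertex $w$ the linear constraint ``coordinate at $w$ equals zero'' has codimension at most $1$; hence when $m_A(\lambda)\geq 2$ one can find a $\lambda$-eigenvector vanishing at any chosen $w$, which immediately gives $\lambda\in\sigma(A(w))$ and activates part (1). At a Parter vertex $u$ so produced, the branch formula yields $\sum_i m_{A[T_i]}(\lambda)=m_A(\lambda)+1$, and a counting argument, applied inductively inside any branch where the contribution is already concentrated, forces at least three distinct branches to contribute when $m_A(\lambda)\geq 2$, and at least two when $m_A(\lambda)\geq 1$. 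The degree bounds $\deg_T(u)\geq 3$ and $\deg_T(u)\geq 2$ follow at once.

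The main obstacle I anticipate is the iterative step in part (1): one must show that whenever $v$ is not itself Parter but still satisfies the hypothesis, there is a genuinely new vertex closer to a Parter vertex which again satisfies the hypothesis. This is the classical Parter-Weiner ``walking'' argument, and choosing the right invariant, for instance the support of a selected $\lambda$-eigenvector or the branch-count carrying $\lambda$, so as to prevent the procedure from cycling is the delicate point. A secondary subtlety is ruling out exactly two contributing branches in (2): this requires using both the branch-sum arithmetic at $u$ and the inductive possibility of moving to a new Parter vertex inside any branch whose contribution absorbs the full jump.
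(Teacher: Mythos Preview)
The paper does not prove this statement; it is quoted as a known result from \cite{Johnson3} and used only as a tool (in the proof of Lemma~\ref{lem10}). So there is no ``paper's own proof'' against which to compare your proposal.

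That said, your outline follows the classical Parter--Wiener strategy (interlacing trichotomy plus the block-diagonal branch formula for $A(w)$, then a walking/induction argument). This is indeed the standard route in the literature, and your identification of the two basic tools is correct. Where your sketch is genuinely incomplete is exactly where you flag it: in the inductive step of part~(1) you write ``restrict the analysis to the corresponding principal submatrix and iterate,'' but this is not what is needed. You want a Parter vertex for $A$, not for $A[T_i]$, so merely passing to $A[T_i]$ loses the connection to $m_A(\lambda)$. The actual argument requires showing that if $v$ is neutral (i.e., $m_{A(v)}(\lambda)=m_A(\lambda)$), then exactly one branch $T_i$ at $v$ is a ``downer branch'' in the sense that moving to the neighbor $u_i\in T_i$ preserves or increases the multiplicity in $A$ itself; this uses either a rank/adjugate argument or the bridge formula for characteristic polynomials, not just the block sum $\sum_i m_{A[T_i]}(\lambda)$. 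Without that, your iteration has no invariant guaranteeing progress toward a Parter vertex for the original matrix $A$.

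Similarly, in part~(2) the claim that ``a counting argument\ldots forces at least three distinct branches to contribute'' is the heart of the matter and is not justified by what you have written: the equation $\sum_i m_{A[T_i]}(\lambda)=m_A(\lambda)+1\geq 3$ alone does not prevent the sum from being concentrated in one or two branches. The exclusion of two branches requires an additional step (again via the bridge/neighbors formula or an eigenvector-support argument showing that a two-branch concentration would contradict $u$ being Parter). These are precisely the points where the textbook proof does nontrivial work.
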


\begin{thm}[Interlacing inequalities] \cite[Chapter 4]{Horn}\label{interlacing}
Let $A\in M_n(\mathbb{C})$ be a Hermitian matrix with eigenvalues $$\alpha_1 \leq \cdots \leq \alpha_n.$$ Let $B\in M_{n-1}(\mathbb{C})$ be the principal submatrix resulting from the deletion of row and column $i$ of $A$. Suppose the Hermitian matrix $B$ has eigenvalues 
$$\beta_1 \leq \cdots \leq \beta_{n-1}. $$ 
Then we have the inequalities 
$$\alpha_1 \leq \beta_1 \leq \alpha_2 \leq\cdots \leq \beta_{n-1} \leq \alpha_n. $$
\end{thm}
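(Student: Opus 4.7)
The plan is to derive the Cauchy interlacing inequalities from the Courant--Fischer min--max characterization of Hermitian eigenvalues. Recall that for Hermitian $A \in M_n(\mathbb{C})$ with eigenvalues $\alpha_1 \leq \cdots \leq \alpha_n$, one has
$$\alpha_k \;=\; \min_{\substack{S \subseteq \mathbb{C}^n \\ \dim S = k}} \;\max_{\substack{x \in S \\ \|x\|=1}} x^* A x.$$
I would first set up the obvious isometric embedding $\iota : \mathbb{C}^{n-1} \hookrightarrow \mathbb{C}^n$ whose image is the hyperplane $W := \{x \in \mathbb{C}^n : x_i = 0\}$. Because $B$ is the principal submatrix of $A$ obtained by deleting row and column $i$, one checks directly that $y^* B y = (\iota y)^* A (\iota y)$ for every $y \in \mathbb{C}^{n-1}$. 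Thus the Courant--Fischer formula for $\beta_k$ can be rewritten as a min--max running over $k$-dimensional subspaces of $W$ inside $\mathbb{C}^n$.

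For the lower bound $\alpha_k \leq \beta_k$, I would observe that every $k$-dimensional subspace $S' \subset W$ is, in particular, a $k$-dimensional subspace of $\mathbb{C}^n$. Taking the minimum over a smaller family can only increase the result, so
$$\alpha_k \;=\; \min_{\dim S = k,\; S \subseteq \mathbb{C}^n} \max_{x \in S} x^* A x \;\leq\; \min_{\dim S' = k,\; S' \subseteq W} \max_{x \in S'} x^* A x \;=\; \beta_k.$$

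For the upper bound $\beta_k \leq \alpha_{k+1}$, I would exploit the dual max--min characterization together with a dimension-count. Let $S \subseteq \mathbb{C}^n$ be a $(k+1)$-dimensional subspace attaining $\max_{x \in S} x^* A x = \alpha_{k+1}$. Since $W$ has codimension $1$ in $\mathbb{C}^n$, the intersection $S \cap W$ has dimension at least $k$; pick any $k$-dimensional subspace $S' \subseteq S \cap W$. Then
$$\beta_k \;\leq\; \max_{x \in S'} x^* A x \;\leq\; \max_{x \in S} x^* A x \;=\; \alpha_{k+1},$$
completing the argument. Iterating over $k = 1, \ldots, n-1$ yields the full interlacing chain.

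The main technical point, and essentially the only place one must be careful, is the transfer between the two quadratic forms under the embedding $\iota$: one must verify that restricting $A$'s quadratic form to $W$ really does reproduce $B$'s quadratic form, which follows because the deleted coordinate contributes nothing when $x_i = 0$. Everything else is a purely variational manipulation of min--max formulas and a one-line dimension argument, so no nontrivial obstacle arises once the Courant--Fischer identity is in hand.
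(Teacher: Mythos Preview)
Your argument via the Courant--Fischer min--max principle is correct and is in fact the standard textbook proof of Cauchy interlacing. Note, however, that the paper does not supply its own proof of this theorem: it is stated as background with a citation to \cite{Haemers} and used as a black box in the sequel. So there is no ``paper's proof'' to compare against; you have simply filled in a result the authors quote without argument, and done so in the canonical way.
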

The interlacing inequalities significantly constrain the multiplicity lists of a real symmetric or Hermitian matrix and its principal submatrices. For instance, if $A$ has eigenvalue $\lambda$ with multiplicity $m_A(\lambda)$ and we remove the row and column corresponding to a vertex $v$ in the graph of $A$, we have
$$ |m_{A(v)}(\lambda) -m_A(\lambda)| \leq 1.$$

We also use the \textit{neighbors formula}, a combinatorial expansion of the determinant with references to the underlying graph of a matrix.
\begin{thm}[Neighbors Formula, surveyed in \cite{Johnson3}] \label{neighbors}
 When the underlying graph of a Hermitian matrix $A=(a_{ij})$ is a tree $T$, consider a particular vertex $v$ with neighbors $u_1,\ldots,u_k$. Let $T_j$ be the branch of $T$ at $v$ which contains $u_j$. Let $p_A(t)=\det(tI -A )$ denote the characteristic polynomial of $A$. Then 
$$ p_A(t) = (t-a_{vv}) \prod_{j=1}^k p_{A[T_j]}(t) -\sum_{j=1}^k |a_{vu_j}|^2p_{A[T_j-u_j]}(t)\prod_{ \substack{\ell=1 \\ \ell \neq j }  }^k p_{A[T_\ell]}(t).$$
\end{thm}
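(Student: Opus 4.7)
The plan is to derive the formula from the combinatorial (matching) expansion of the determinant of a matrix whose pattern is a tree. Starting from the Leibniz formula $p_A(t) = \sum_\sigma \mathrm{sgn}(\sigma) \prod_i (tI-A)_{i,\sigma(i)}$, a permutation $\sigma$ contributes a nonzero term only if each non-fixed point $i$ satisfies $(i, \sigma(i)) \in E(T)$. Since $T$ has no cycles, the only such permutations are those whose non-fixed points decompose into disjoint transpositions supported on edges of $T$; that is, they correspond to matchings $M$ of $T$ with fixed points on the uncovered vertices. Each matched edge contributes sign $-1$ and weight $(-a_{ij})(-a_{ji}) = |a_{ij}|^2$ (using $a_{ji} = \overline{a_{ij}}$), while each fixed vertex contributes $t - a_{ww}$. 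This yields the expansion
$$p_A(t) = \sum_{M} (-1)^{|M|} \prod_{(i,j) \in M} |a_{ij}|^2 \prod_{w \notin V(M)} (t - a_{ww}),$$
summed over matchings $M$ of $T$, where $V(M)$ is the set of vertices covered by $M$.

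With this in hand, I would split the matchings of $T$ according to their behavior at $v$. Matchings in which $v$ is \emph{unmatched} are exactly the matchings of $T \setminus v$; since $T \setminus v = T_1 \sqcup \cdots \sqcup T_k$, these decompose into independent matchings of the $T_j$, giving a combined contribution of
$$(t - a_{vv}) \prod_{j=1}^k p_{A[T_j]}(t),$$
where one uses that the determinant of a block-diagonal matrix is the product of the block determinants. Matchings in which $v$ is matched to some neighbor $u_j$ take the form $\{(v, u_j)\} \sqcup M'$ with $M'$ a matching of $T \setminus \{v, u_j\} = (T_j \setminus u_j) \sqcup \bigsqcup_{l \neq j} T_l$. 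Accounting for the extra sign and the weight $|a_{v u_j}|^2$ of the new edge, these collectively contribute
$$-\sum_{j=1}^k |a_{v u_j}|^2 \, p_{A[T_j - u_j]}(t) \prod_{l \neq j} p_{A[T_l]}(t).$$

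Adding the two cases yields the displayed formula. The only nonroutine step is the matching expansion itself, which is a standard consequence of the acyclicity of $T$ but is nonetheless the crux of the argument; once it is in place, the rest is simply tracking how a matching of $T$ restricts to the components that remain after removing $v$ or $\{v, u_j\}$. An essentially equivalent route is cofactor expansion along the row of $v$ combined with a Schur-complement calculation using the block-diagonal form of $(tI - A)(v) = \bigoplus_j (tI-A)[T_j]$, but the matching argument makes the sign bookkeeping the most transparent.
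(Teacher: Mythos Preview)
Your proof is correct. The matching expansion of $\det(tI-A)$ for a matrix whose zero pattern is a tree is exactly the right tool, and your case split at $v$ (unmatched vs.\ matched to some $u_j$) cleanly produces the two terms of the formula; the sign and weight bookkeeping is accurate, including the Hermitian identity $a_{vu_j}a_{u_jv}=|a_{vu_j}|^2$.

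As for comparison: the paper does not prove this statement at all. It is quoted as a background result from \cite{Johnson3} and used as a black box in the proof of Lemma~\ref{lem10}. So there is no ``paper's own proof'' to weigh your argument against. Your write-up supplies a self-contained derivation where the paper simply cites one; the alternative route you mention at the end (cofactor expansion along row $v$, using that $(tI-A)(v)$ is block diagonal with blocks $(tI-A)[T_j]$) is the other standard proof and is essentially the one most textbook treatments give. Both are equivalent in difficulty; the matching version has the advantage, as you note, that the sign $(-1)^{|M|}$ is immediate, whereas in the cofactor route one must chase the $(-1)^{v+u_j}$ signs through a second expansion. One small convention worth making explicit: when $T_j$ consists of the single vertex $u_j$, the factor $p_{A[T_j-u_j]}(t)$ is the determinant of the empty matrix and should be read as $1$.
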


\subsection{Example of the LSP} The following figure \cite[Example 21]{TwoAndrews} gives a $3$-linear tree $T:=LT(T_1,s_1,T_2,s_2,T_3)$ which has been labelled with its canonical decomposition into paths of length $s_1=1, s_2=1$ and generalized stars $T_1,T_2,T_3$.
\begin{center}
\begin{figure}[!ht]
\centering
\def\r{1.9}
\def\k{12pt}
\begin{tikzpicture}[scale=1.0,
bor/.style={circle, draw, only marks, mark=*, fill=white,mark size=\k+3},
rel/.style={rectangle,draw,rounded corners=0.6ex,  fill=white, minimum size=\k},
coo/.style={circle, draw, only marks, mark=*, fill=white, minimum size=\k+3},
]
\draw (0,0)--({\r*cos(3*pi/4 r)}, {\r*sin(3*pi/4 r)})node[coo]{$T_2$};
\draw (0,0)--({\r*cos(pi/4 r)}, {\r*sin(pi/4 r)})node[coo]{$T_2$};
\draw (-2*\r,0) node[coo]{}-- (-\r,0) node[coo]{}-- (0,0) node[coo]{$T_2$}-- (0,-\r)node[coo]{$T_2$}   --(0,0) -- (\r,0)node[coo]{} --(2*\r,0);
\foreach \x/\xtext in {3,5}{
\draw[shift={(-2*\r,0)}] (0pt,0pt) node[coo]{$T_1$} -- ({\r*cos(\x*pi/4 r)}, {\r*sin(\x*pi/4 r)})
node[coo]{$T_1$};}
\foreach \x/\xtext in {-1,1}{
\draw[shift={(2*\r,0)}] (0pt,0pt) node[coo]{$T_3$}-- ({\r*cos(\x*pi/4 r)}, {\r*sin(\x*pi/4 r)})
node[coo]{$T_3$};
}
\end{tikzpicture}
\caption{A $3$-linear tree on $12$ vertices.}
\end{figure}
\end{center}

The allowable upward multiplicity lists for each of the trees are as follows, where each upward multiplicity is
\begin{align*}
\hat{\mathcal{L}}_0(T_1) &= \{  (1,\hat{1},1),(1,\hat{0},1,\hat{0},1) \}, \\
\hat{\mathcal{L}}_0(T_2) &= \{ (1,\hat{2},1),(1,\hat{1},1,\hat{0},1),(1,\hat{0},1,\hat{1},1),(1,\hat{0},1,\hat{0},1,\hat{0},1)  \}, \\
\hat{\mathcal{L}}_0(T_3) &= \{(1,\hat{1},1),(1,\hat{0},1,\hat{0},1)\}.
\end{align*}
These exhaust all of the multiplicity lists compatible with Theorem \ref{gstarmult}. The LSP will then generate the following unordered multiplicity lists:
\begin{align*}
\{(6,1,1,1,1,1,1),&(5,2,1,1,1,1,1),(5,1,1,1,1,1,1,1),(4,3,1,1,1,1,1),(4,2,2,1,1,1,1),(4,2,1,1,1,1,1,1), \\
&(4,1,1,1,1,1,1,1,1),(3,3,2,1,1,1,1),(3,3,1,1,1,1,1,1),(3,2,2,1,1,1,1,1),\\
&(3,2,1,1,1,1,1,1,1),(3,1,1,1,1,1,1,1,1,1),(2,2,2,2,2,1,1),(2,2,2,2,1,1,1,1),\\
&(2,2,2,1,1,1,1,1,1),(2,2,1,1,1,1,1,1,1,1),(2,1,1,1,1,1,1,1,1,1,1)\}.
\end{align*}
The main result of \cite{TwoAndrews} said that the set of possible unordered multiplicity lists ${\mathcal{L}}(T)$ was a subset of these lists generated by the LSP. Our main result is that they are in fact equal. Two valid superpositions generated by the LSP leading to multiplicity lists are given in Table \ref{lspexample}.

\begin{table}
\centering
{\renewcommand{\arraystretch}{1.5}
\begin{tabular}{llllllll}
                            & $\lambda_1$             & $\lambda_2$             & $\lambda_3$ & $\lambda_4$ & $\lambda_5$& $\lambda_6$& $\lambda_7$ \\ \cline{2-8}
\multicolumn{1}{l|}{$b_1^+$}     & \multicolumn{1}{l|}{$1$} & \multicolumn{1}{l|}{$0$} & \multicolumn{1}{l|}{$0$} & \multicolumn{1}{l|}{ $\widehat{1}$ } & \multicolumn{1}{l|}{$0$}& \multicolumn{1}{l|}{$0$}& \multicolumn{1}{l|}{$1$} \\ \cline{2-8} 
\multicolumn{1}{l|}{$c_1^+$}     & \multicolumn{1}{l|}{$0$} & \multicolumn{1}{l|}{$0$} & \multicolumn{1}{l|}{$0$} & \multicolumn{1}{l|}{$1$} & \multicolumn{1}{l|}{$0$} & \multicolumn{1}{l|}{$0$}& \multicolumn{1}{l|}{$0$}\\ \cline{2-8} 
\multicolumn{1}{l|}{$b_2^+$}     & \multicolumn{1}{l|}{$0$} & \multicolumn{1}{l|}{$1$} & \multicolumn{1}{l|}{$0$} & \multicolumn{1}{l|}{$\widehat{2}$} & \multicolumn{1}{l|}{$0$}& \multicolumn{1}{l|}{$1$}& \multicolumn{1}{l|}{$0$} \\ \cline{2-8} 
\multicolumn{1}{l|}{$c_2^+$}     & \multicolumn{1}{l|}{$0$} & \multicolumn{1}{l|}{$0$} & \multicolumn{1}{l|}{$0$} & \multicolumn{1}{l|}{$1$} & \multicolumn{1}{l|}{$0$}& \multicolumn{1}{l|}{$0$}& \multicolumn{1}{l|}{$0$} \\ \cline{2-8} 
\multicolumn{1}{l|}{$b_3^+$}     & \multicolumn{1}{l|}{$0$} & \multicolumn{1}{l|}{$0$} & \multicolumn{1}{l|}{$1$} & \multicolumn{1}{l|}{$\widehat{1}$} & \multicolumn{1}{l|}{$1$}& \multicolumn{1}{l|}{$0$}& \multicolumn{1}{l|}{$0$} \\ \cline{2-8} 
sum                            & $1$                 & $1$                 & $1$              & $6$              & $1$       & $1$       & $1$              
\end{tabular}}
\hspace{5mm}
{\renewcommand{\arraystretch}{1.5}
\begin{tabular}{lllllllll}
                            & $\lambda_1$             & $\lambda_2$             & $\lambda_3$ & $\lambda_4$ & $\lambda_5$& $\lambda_6$& $\lambda_7$& $\lambda_8$ \\ \cline{2-9}
\multicolumn{1}{l|}{$b_1^+$}     & \multicolumn{1}{l|}{$0$} & \multicolumn{1}{l|}{$1$} & \multicolumn{1}{l|}{$0$} & \multicolumn{1}{l|}{ $\widehat{1}$ } & \multicolumn{1}{l|}{$0$}& \multicolumn{1}{l|}{$1$}& \multicolumn{1}{l|}{$0$}& \multicolumn{1}{l|}{$0$} \\ \cline{2-9} 
\multicolumn{1}{l|}{$c_1^+$}     & \multicolumn{1}{l|}{$0$} & \multicolumn{1}{l|}{$0$} & \multicolumn{1}{l|}{$0$} & \multicolumn{1}{l|}{$1$} & \multicolumn{1}{l|}{$0$} & \multicolumn{1}{l|}{$0$}& \multicolumn{1}{l|}{$0$}& \multicolumn{1}{l|}{$0$}\\ \cline{2-9} 
\multicolumn{1}{l|}{$b_2^+$}     & \multicolumn{1}{l|}{$0$} & \multicolumn{1}{l|}{$0$} & \multicolumn{1}{l|}{$1$} & \multicolumn{1}{l|}{$\widehat{0}$} & \multicolumn{1}{l|}{$1$}& \multicolumn{1}{l|}{$\widehat{1}$}& \multicolumn{1}{l|}{$1$} & \multicolumn{1}{l|}{$0$}\\ \cline{2-9} 
\multicolumn{1}{l|}{$c_2^+$}     & \multicolumn{1}{l|}{$0$} & \multicolumn{1}{l|}{$0$} & \multicolumn{1}{l|}{$0$} & \multicolumn{1}{l|}{$1$} & \multicolumn{1}{l|}{$0$}& \multicolumn{1}{l|}{$0$}& \multicolumn{1}{l|}{$0$} & \multicolumn{1}{l|}{$0$}\\ \cline{2-9} 
\multicolumn{1}{l|}{$b_3^+$}     & \multicolumn{1}{l|}{$1$} & \multicolumn{1}{l|}{$0$} & \multicolumn{1}{l|}{$0$} & \multicolumn{1}{l|}{$0$} & \multicolumn{1}{l|}{$0$}& \multicolumn{1}{l|}{$\widehat{1}$}& \multicolumn{1}{l|}{$0$}& \multicolumn{1}{l|}{$1$} \\ \cline{2-9} 
sum                            & $1$                 & $1$                 & $1$              & $3$              & $1$       & $3$       & $1$   & $1$              
\end{tabular}}
\vspace{5mm}
\caption{Two example superpositions from the LSP}
\label{lspexample}
\end{table}

\section{Sufficiency of the LSP}\label{sec3}

In \cite{TwoAndrews}, the authors obtained a necessary condition for a $k$-linear tree to have ordered multiplicity list $L=(a_1,\ldots, a_\nu)$ from its constituent generalized star and path lists. {Here, our purpose is to show the sufficiency of these conditions, so that any multiplicity list obtained from the LSP is achieved by some matrix for the given linear tree}. This was previously shown for all linear trees with two HDV's and all depth one linear trees (caterpillars) \cite{TwoAndrews}, and conjectured there for all linear trees. The full converse has a number of interesting consequences which will be discussed later.


Our basic method is similar to that in \cite{TwoAndrews}, i.e., to use the Implicit Function Theorem (IFT). However, instead of \cite[Lemma 18]{TwoAndrews}, we use Lemma \ref{lem10}, below.
\begin{thm}[Implicit Function Theorem]
Let $f: \mathbb{R}^{n+m} \to \mathbb{R}^n$ be a continuously differentiable function. Suppose that, for $x_0\in \mathbb{R}^n$ and $y_0\in \mathbb{R}^m$, $f(x_0,y_0)=0$ and the Jacobian $\frac{\partial f}{\partial x}(x_0,y_0)$ is invertible. Then there exists a neighborhood $U\subset \mathbb{R}^m$ around $y_0$ such that $f(x,y)=0$ has a unique solution $x$ for any fixed $y\in U$. Furthermore,  the solution $x$ as a function of $y$ is continuous at $y_0$.
\end{thm}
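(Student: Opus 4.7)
The plan is to rewrite the equation $f(x,y) = 0$ as a fixed-point problem in $x$ and apply the Banach contraction mapping principle. Without loss of generality, I would translate so that $x_0 = 0$ and $y_0 = 0$, and set $A := \frac{\partial f}{\partial x}(0,0)$, which is invertible by hypothesis. Define
\[
T(x,y) := x - A^{-1} f(x,y),
\]
so that $f(x,y) = 0$ is equivalent to $T(x,y) = x$. This is the standard device that converts a linearly non-degenerate equation into a fixed-point equation whose linearization at the base point vanishes.

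Next I would exploit the fact that $\frac{\partial T}{\partial x}(x,y) = I - A^{-1}\frac{\partial f}{\partial x}(x,y)$ vanishes at $(0,0)$. By continuity of $\frac{\partial f}{\partial x}$, choose $r > 0$ and $\delta_0 > 0$ so that $\bigl\|\frac{\partial T}{\partial x}(x,y)\bigr\| \leq \tfrac{1}{2}$ on the closed box $\overline{B}(0,r) \times \overline{B}(0,\delta_0)$. The mean value inequality then yields $\|T(x,y) - T(x',y)\| \leq \tfrac{1}{2}\|x - x'\|$ for all $x, x' \in \overline{B}(0,r)$ and $\|y\| \leq \delta_0$, so $T(\cdot, y)$ is contractive on this ball. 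To make $T(\cdot, y)$ also a self-map of $\overline{B}(0,r)$, I would shrink to some $\delta \leq \delta_0$ ensuring $\|T(0,y)\| \leq r/2$ for $\|y\| \leq \delta$, which is possible because $T$ is continuous and $T(0,0) = 0$. Then for $x \in \overline{B}(0,r)$ and such $y$, $\|T(x,y)\| \leq \|T(x,y) - T(0,y)\| + \|T(0,y)\| \leq \tfrac{1}{2} r + \tfrac{r}{2} = r$.

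The Banach fixed-point theorem then supplies, for each $\|y\| \leq \delta$, a unique $x(y) \in \overline{B}(0,r)$ with $f(x(y), y) = 0$, producing the desired neighborhood $U = \{y : \|y\| < \delta\}$. For the ``arbitrarily close'' clause, I would observe that the entire construction works for any sufficiently small $r > 0$: given $\varepsilon > 0$, first pick $r \leq \varepsilon$, then select the corresponding $\delta$; every $y$ in the resulting neighborhood yields a solution within $\varepsilon$ of $x_0$. The main subtlety is the order of the quantifiers: contractivity is a derivative-level condition that fixes $r$ first (via continuity of $\frac{\partial f}{\partial x}$), whereas the self-mapping property then forces $\delta$ to be shrunk so that the constant term $T(0,y)$ stays small relative to the fixed radius. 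Getting these two neighborhoods to cohere is the only place where care is required.
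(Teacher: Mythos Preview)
Your argument is the standard contraction-mapping proof of the implicit function theorem and is correct as outlined; the quantifier ordering you flag is handled correctly. Note, however, that the paper does not prove this statement at all: it is quoted as a classical result and used as a black box, so there is no ``paper's own proof'' to compare against. Your write-up would serve perfectly well as a self-contained justification, but in the context of this paper no proof is expected or supplied.
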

The essence of the Jacobian method is to view a matrix with graph $G$ as a multivariable function of the entries. We then find a subgraph $G_0 \subset G$ such that $A^{(0)}$ with graph $G_0$ has a spectrum with multiplicities we desire. We will always consider $G_0$ with the same number of vertices at $G$. So long as we can select enough variables, henceforth called \textit{implicit entries}, to find a nonsingular Jacobian with respect to those entries, we can then perturb the edges in $G \setminus G_0$ by some sufficiently small $\epsilon$. By letting $f$ be a set of eigenvalue conditions ensuring that the resulting graph has the spectrum we desire, the resulting matrix will have graph $G$ but the same spectrum as $A^{(0)}$.

We now appeal to the following lemma, which reduces the difficulty of checking the nonsingularity of the Jacobian when the underlying graph of $A$ is a tree.
\begin{lem}\cite{Johnson4}\label{lem8}
Let T be a tree of order $n$ and $F=(f_k)$, $f_k(A) = \det (A[S_k]-\lambda_kI)$ a vector of $r\leq n$ determinant conditions, with $S_k \subseteq \{1,2,\ldots,n\}$ a given index subset and $\lambda_k$ a real number, $k=1,\ldots,r$. Assume $r$ implicit entries have been identified. Suppose that a real symmetric matrix $A^{(0)}$, whose graph is a subgraph of $T$, is the direct sum of irreducible matrices $A^{(0)}_1,\ldots,A^{(0)}_p$. Let $J(A^{(0)})$ be the Jacobian matrix of $F$ with respect to the implicit entries evaluated at $A^{(0)},$ and suppose 
\begin{enumerate}
\item every off-diagonal implicit entry in $A^{(0)}$ has a nonzero value;
\item for each $k=1,\ldots,r$, $f_k(A^{(0)}_l)=0$ for exactly one $l\in \{1,2,\ldots,p\}$;
\item for each $l=1,\ldots,p$, the columns of $J(A^{(0)})$ associated with the implicit entries of $A^{(0)}_l$ are linearly independent.
\end{enumerate} 
Then $J(A^{(0)})$ is nonsingular.
\end{lem}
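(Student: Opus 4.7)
The plan is to exploit the block-diagonal structure of $A^{(0)}$ to show that $J(A^{(0)})$ is, after a permutation of rows and columns, block-diagonal with square nonsingular diagonal blocks, one for each irreducible summand $A^{(0)}_l$. Let $V_l$ denote the vertex set of $A^{(0)}_l$, so $\{V_1,\ldots,V_p\}$ partitions the vertex set of $T$. Since $A^{(0)}$ is block-diagonal, any principal submatrix $A^{(0)}[S_k]$ decomposes as a direct sum of $A^{(0)}_l[S_k\cap V_l]$, giving the product factorization
\[
f_k(A^{(0)}) \;=\; \prod_{l=1}^p \det\!\bigl(A^{(0)}_l[S_k\cap V_l]-\lambda_k I\bigr).
\]

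By hypothesis 2, exactly one factor, say the $l_k$-th, vanishes at $A^{(0)}$. Applying Leibniz's product rule, every term in $\partial f_k/\partial a_{ij}$ contains a vanishing factor except the one in which the $l_k$-th factor is itself differentiated; hence
\[
\left.\frac{\partial f_k}{\partial a_{ij}}\right|_{A^{(0)}} \;=\; c_k\cdot \left.\frac{\partial \det\!\bigl(A_{l_k}[S_k\cap V_{l_k}]-\lambda_k I\bigr)}{\partial a_{ij}}\right|_{A^{(0)}},
\]
where $c_k\neq 0$ is the product of the remaining nonvanishing factors. The surviving derivative vanishes unless the entry $a_{ij}$ actually lies in block $l_k$, since the matrix being differentiated only involves entries indexed by $V_{l_k}$. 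Consequently, if we regroup the rows of $J(A^{(0)})$ according to the assignment $k\mapsto l_k$, and regroup the columns according to which component $A^{(0)}_l$ an implicit entry belongs to, the reordered Jacobian is block-diagonal. Hypothesis 1 is used implicitly here: it ensures that the graph-component decomposition of $A^{(0)}$ is genuine, so that each implicit entry is unambiguously located within a single block.

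It remains to show each diagonal block is nonsingular. Let $r_l := |\{k : l_k = l\}|$ and let $e_l$ be the number of implicit entries belonging to $A^{(0)}_l$. By the block-diagonal structure just established, the columns of $J(A^{(0)})$ associated with $A^{(0)}_l$ are supported only on the $r_l$ rows with $l_k=l$, so hypothesis 3 asserts that $e_l$ vectors living in an $r_l$-dimensional ambient space are linearly independent, forcing $e_l\le r_l$. Combined with $\sum_l e_l = r = \sum_l r_l$ (the assumption that the number of implicit entries equals the number of conditions), this forces $e_l=r_l$ for each $l$. Each diagonal block is therefore square with linearly independent columns, hence nonsingular, and a block-diagonal matrix with nonsingular diagonal blocks is nonsingular. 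The main obstacle is not any single step in isolation but rather recognizing the right bookkeeping: the product-rule calculation, the row/column permutation, and the dimension count interlock so that the apparently weak hypothesis 3 (only columns need be linearly independent) upgrades automatically to invertibility of each block.
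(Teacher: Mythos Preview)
Your proof is correct and aligns with the paper's approach: the paper does not prove this lemma in full (it is cited from \cite{Johnson4}) but summarizes the idea in one sentence immediately after the statement---``The Jacobian reduces to a block diagonal form so we can merely check the nonsingularity of each component's Jacobian instead''---which is precisely the mechanism you have spelled out. Your use of hypothesis~1 to guarantee that every implicit entry lies within a single summand, the product-rule computation isolating the $l_k$-th block, and the dimension count $\sum e_l = r = \sum r_l$ forcing $e_l = r_l$ are all sound and constitute the expected argument.
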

The Jacobian reduces to a block diagonal form, so that we can merely check the nonsingularity of each component's Jacobian instead. We therefore only need to examine the case of a single generalized star. We can use the previous lemma to ``chain together'' multiple generalized stars into a linear tree, hence proving the sufficiency of the LSP conditions. Lemma \ref{lem10}, though extremely technical, ensures that we can select enough implicit entries to create a nonsingular Jacobian.  

Importantly, we also use a correspondence between the entries of a tridiagonal matrix and the eigenvalues of it, and its leading $(n-1)$-by-$(n-1)	$ principal submatrix.
\begin{thm}\cite{Gray}\label{JacSpectral} 
Given $\{\omega_1,\ldots,\omega_n\}$ and $\{\mu_1,\ldots,\mu_{n-1}\}$ which interlace as follows:
$$\omega_1< \mu_1<\omega_2<  \cdots < \mu_{n-1} < \omega_n,$$
there exists a unique symmetric tridiagonal matrix $J$ with positive off-diagonal entries, such that the spectrum of $J$ is $\{\omega_k\}_{k=1}^n$ and the spectrum of $J$ with its last row and column removed is $\{\mu_k\}_{k=1}^{n-1}$.
\end{thm}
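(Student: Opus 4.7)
The plan is to prove existence and uniqueness simultaneously by strong induction on $n$, using a constructive polynomial-division argument. The base case $n = 1$ is immediate, with $J = (\omega_1)$ and no off-diagonal entries to constrain. For the inductive step I would write any candidate matrix in block form
$$J = \begin{pmatrix} J' & b_{n-1}\, e_{n-1} \\ b_{n-1}\, e_{n-1}^{T} & a_n \end{pmatrix},$$
where $J'$ is an $(n-1)\times(n-1)$ symmetric tridiagonal matrix with positive off-diagonal entries, $b_{n-1} > 0$ is the new off-diagonal entry, and $e_{n-1}$ is the last standard basis vector. Cofactor expansion of $p_n(t) := \det(tI - J)$ along the last row yields the classical three-term identity
$$p_n(t) = (t - a_n)\,p_{n-1}(t) - b_{n-1}^2\, q_{n-2}(t),$$
where $p_{n-1}$ is the characteristic polynomial of $J'$ and $q_{n-2}$ is the characteristic polynomial of $J'$ with its last row and column removed.

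Since $p_n$ and $p_{n-1}$ are fully prescribed by the given interlacing spectra, I would perform Euclidean division of the monic $p_n$ by the monic $p_{n-1}$; the quotient is forced to have the form $t - a_n$ for a unique real $a_n$, and the remainder $r(t)$ has degree at most $n-2$. Matching with the recurrence demands $b_{n-1}^2\, q_{n-2}(t) = -r(t)$. The crucial step is a sign analysis at the $\mu_k$: since $p_{n-1}(\mu_k) = 0$, we have $r(\mu_k) = p_n(\mu_k) = \prod_{j=1}^n (\mu_k - \omega_j)$, whose sign is $(-1)^{n-k}$ by the strict interlacing hypothesis. The alternation forces at least one zero of $r$ strictly between each consecutive pair $\mu_k, \mu_{k+1}$, giving $n-2$ simple real roots; since $\deg r \leq n-2$, these exhaust the roots of $r$, and they strictly interlace $\{\mu_k\}$. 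Evaluating on $(\mu_{n-1},\infty)$ then shows the leading coefficient of $r$ is negative, so one may uniquely choose $b_{n-1} > 0$ and a monic $q_{n-2}$ with $-r = b_{n-1}^2\, q_{n-2}$.

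Having recovered $a_n$, $b_{n-1}$, and the monic polynomial $q_{n-2}$ whose simple real roots strictly interlace the $\mu_k$, I would invoke the induction hypothesis on the interlacing pair consisting of $\{\mu_k\}$ and the roots of $q_{n-2}$ to obtain a unique symmetric tridiagonal $J'$ of size $n-1$ with positive off-diagonal entries realizing this data. Assembling $J'$ with the scalar $a_n$ and the connector $b_{n-1}$ produces the required $J$. Uniqueness is built into every step: polynomial division is unique, the positive square root $b_{n-1}$ is forced by the sign of the leading coefficient of $-r$, and induction delivers a unique $J'$. The main technical obstacle is executing the sign analysis cleanly so as to guarantee both that the leading coefficient of $r$ is strictly negative and that the new interlacing is strict, because if some $r(\mu_k)$ were to vanish then $b_{n-1}$ could collapse to zero and the induction would fail; this is precisely where the strict (rather than weak) hypothesis $\omega_k < \mu_k < \omega_{k+1}$ is indispensable.
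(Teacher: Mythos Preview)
The paper does not prove this theorem; it is quoted from \cite{Gray} as a background tool (alongside Parter--Weiner, interlacing, and the neighbors formula) and invoked later inside the proof of Lemma~\ref{lem10}. There is therefore no in-paper argument to compare your proposal against.

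That said, your inductive construction via Euclidean division of $p_n$ by $p_{n-1}$ and the three-term recurrence $p_n=(t-a_n)p_{n-1}-b_{n-1}^2 q_{n-2}$ is a correct and standard route to this classical result. The sign count $r(\mu_k)=p_n(\mu_k)$ with sign $(-1)^{n-k}$ is right, the resulting $n-2$ sign changes force $\deg r = n-2$ with simple real roots strictly interlacing the $\mu_k$, and the observation that $r(\mu_{n-1})<0$ together with the absence of roots to the right of $\mu_{n-1}$ pins down the negative leading coefficient of $r$, hence a unique $b_{n-1}>0$. The only point worth tightening in a write-up is to state explicitly that $r$ is not identically zero (since each $r(\mu_k)\neq 0$ by strict interlacing), so that the degree bound $\deg r\le n-2$ combined with $n-2$ distinct real zeros really forces $\deg r=n-2$; without this remark the extraction of a nonzero $b_{n-1}$ is formally incomplete.
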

Therefore, instead of regarding such a matrix as determined by its $2n-1$ matrix entries, we can regard it as determined (up to signature similarity) by $2n-1$ eigenvalues. A signature matrix is a matrix with $\pm 1$ entries on the diagonal, and signature similarity denotes conjugation by a signature matrix, which flips the signs of off-diagonal entries in a structured manner. Given a symmetric tridiagonal matrix, we can select a signature matrix $S$ so that after conjugation by $S$ every off-diagonal entry is positive.

The following lemma is a direct generalization of \cite[Lemma 18]{TwoAndrews}, which applied to simple stars (each arm has length $1$). We refer the reader to the proof of \cite[Lemma 18]{TwoAndrews} for further discussions of the matrix theoretic results we use. Note that the function $F(A)$ is a $u+1$ dimensional vector, which enforces $u+1$ eigenvalue constraints when set equal to $0$. Matrix $A$ can be regarded as a multivariable function of the matrix entries. Our goal is to perturb $u+1$ of these matrix entries of the matrix, considered as a multivariate function, while maintaining nonsingularity of the Jacobian. We first compose the $F(A)$ function with a continuous change of variables which maps several subsets of the matrix entries to related eigenvalues $\eta_k^{(i)}$, which are a nonlinear function of matrix entries. We then select a total of $u+1$ matrix entries and eigenvalues $\eta_k^{(i)}$ according to the procedure detailed at the beginning of Lemma \ref{lem10}. In terms of mixed matrix entry and eigenvalue coordinates, we obtain a reasonably well-behaved Jacobian, from which we can deduce nonsingularity.

We are now dealing with four sets of eigenvalues. The non-upwards $\lambda$ eigenvalues give eigenvalue constraints, which we form the Jacobian with respect to. The perturbation we perform still preserves $F(A)=0$, meaning that we obtain the desired multiplicity list. The upwards $\hat{\mu}$ eigenvalues can be assigned to each vertex (see \cite{Johnson3} for background about this assignment procedure), in a way which preserves them under perturbation. Finally, there is a set of interlacing $\gamma$ and $\eta$ eigenvalues associated to each arm of our generalized star.



Through the proof of Lemma \ref{lem10}, we will use the generalized star $\mathcal{T}_7$ from Figure \ref{fig2} as a running example. This is a generalized star on $7$ vertices with ordered upward multiplicity list $\hat{\mathcal{L}}_0(\mathcal{T}_7) = (1,\hat{2},1,\hat{0},1,\hat{0},1,\hat{0},1),$ corresponding to any numerical values of the eigenvalues $(\lambda_1,\hat{\mu}_2,\lambda_3,\hat{\mu}_4,\lambda_5,\hat{\mu}_6,\lambda_7,\hat{\mu}_8,\lambda_9)$. This will be a running example through the proof of Lemma \ref{lem10}, as we construct a $5\times 5$ nonsingular Jacobian for a matrix with this graph. A single unique instance of each $\hat{\mu}_{2i}$ has been outlined blue, and dictates what variables we construct a Jacobian with respect to. There are many possible choices, but any of them leads to a nonsingular Jacobian.

\begin{center}
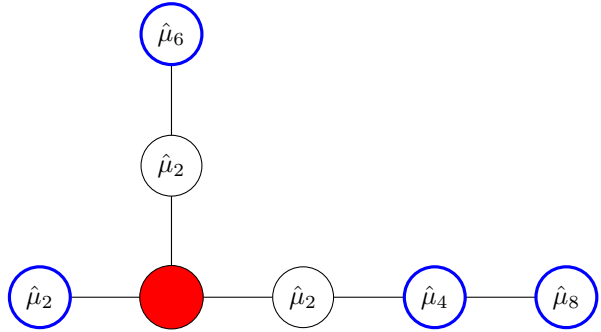
\begin{figure}[!ht]
\centering
\def\r{2.5}
\def\k{6pt}
\begin{tikzpicture}[scale=0.7,
bor/.style={circle, draw, only marks, mark=*, fill=white,mark size=\k},
blue/.style={circle, draw=blue, very thick, only marks, mark=*, fill=white,mark size=\k},
coo/.style={circle, draw, only marks, mark=*, fill=red, minimum size=\k+18pt},
]
\draw (-1*\r,0) node[blue]{$\hat{\mu}_2$}-- (0,0) node[coo]{}-- (0,\r)node[bor]{$\hat{\mu}_2$}  --(0,2*\r)node[blue]{$\hat{\mu}_6$}  --(0,0) -- (\r,0)node[bor]{$\hat{\mu}_2$} --(2*\r,0)node[blue]{$\hat{\mu}_4$}--(3*\r,0)node[blue]{$\hat{\mu}_8$};
\end{tikzpicture}
\caption{Assignment to a $7$ vertex generalized star}
\label{fig2}
\end{figure}
\end{center}

The statement of the following Lemma is identical to \cite[Lemma 18]{TwoAndrews}, but applies to generalized stars instead of simple stars.
\begin{lem}\label{lem10}
Let $T$ be a generalized star on $n$ vertices, and suppose we have a matrix valued function $A(a_1,a_2,\ldots,a_n)$ defined on $n$ real variables, whose range is the set of real symmetric matrices with graph $T$ (or some subgraph of $T$),  with arm lengths $\ell_1,\ldots, \ell_k$. Let upward multiplicity list $\hat{q} = ( {q_1,\dots,q_{2u+1}})$ have distinct upward eigenvalues $\hat{\mu}_2,\ldots,\hat{\mu}_{2u}$ and non-upward eigenvalues $\lambda_1, \lambda_3,\ldots,\lambda_{2u+1}$. Consider a matrix $A^{(0)}$ in the range of $A$ with graph $T$ and non-upwards eigenvalues $\lambda_1,\lambda_3,...,\lambda_u$. We may select ${u+1}$ variables of $A$ so that the Jacobian of the function $F(A)=(\det(A - \lambda_1I),\det(A- \lambda_3I),...,\det(A-\lambda_{2u+1}I))$ is nonsingular at $A^{(0)}$.
\end{lem}


\begin{proof}
Since $A\in \mathcal{S}(T)$ is a generalized adjacency matrix associated to the generalized star $T$, it is a square matrix of size $\left(1+\sum_{i=1}^k\ell_i \right)$ by $\left(1+\sum_{i=1}^k\ell_i\right)$. First note that the upward multiplicity list alternates upward and non-upward eigenvalues (including upward zeros, if necessary), so that the $\lambda_i$ and $\mu_i$ interlace. Therefore the hypothesis that upward $\hat{\mu}_i$ and non-upward $\lambda_i$ alternate is satisfied. This allows us to pair each non-upward $\lambda_i$, except for $\lambda_{2u+1}$, with an upward counterpart. We begin by assigning the upward eigenvalues $\hat{\mu}_i$ to a vertex on $\hat{q_i}+1$ distinct arms of our generalized stars $T$. By the Parter--Weiner theorem there must be a vertex $v$ such that removing $v$ increases the multiplicity of $v$; since we may take the vertex to have degree $\geq 3$, this must refer to the central vertex of our generalized star. Hence we have $m_{A(v)}(\hat{\mu}_i) = m_A(\hat{\mu}_i) +1 = \hat{q_i} +1$, and $A$ has eigenvalue $\hat{\mu}_i$ with multiplicity $\hat{q}_i$.

As we perturb our matrix, these eigenvalues are still assigned to arms, hence they must still be eigenvalues of the whole generalized star. Therefore, we only need a determinant condition on the $u+1$ non-upward eigenvalues. For background on this assignment procedure, refer to \cite{Johnson3}.

When dealing with simple stars, we can assume our matrix is of the form 
$$
\begin{bmatrix}
    a_{0} & a_{1} & a_{2} & \dots  & a_{k} \\
    a_{1} & b_1 & 0 & \dots  & 0 \\
    \vdots & \vdots & \vdots & \ddots & \vdots \\
    a_{k} & 0 & 0 & \dots  & b_k
\end{bmatrix},
$$
whose determinant can be explicitly calculated \cite{TwoAndrews}. However, since our matrix $A$ has a generalized star as its underlying graph, we can assume it is of the form
$$
A = \begin{bmatrix}
    a_{0} & a_{1} & a_{2} & \dots  & a_{k} \\
    a_{1}^T & B_1 & 0 & \dots  & 0 \\
    \vdots & \vdots & \vdots & \ddots & \vdots \\
    a_{k}^T & 0 & 0 & \dots  & B_k
\end{bmatrix},
$$
where each $a_i$ is a row vector with a single nonzero entry in the first position. Each $B_i$ is also a symmetric tridiagonal matrix, corresponding to a single arm of our star. 

We also fix some notation. If $B_i$ is the block matrix corresponding to arm $i$, we say that it has size $n_i\geq 1$ if the underlying path consists of $n_i$ vertices. Let $\{\gamma^{(i)}_1,\gamma^{(i)}_2,\ldots,\gamma^{(i)}_{n_i}\}$ denote the $n_i$ eigenvalues of $B_i$, which must necessarily be distinct since $B_i$ is a symmetric tridiagonal matrix. Let $B_i(1)$ denote the principal submatrix of $B_i$ corresponding to deleting the first row and column, and let $\{\eta^{(i)}_1,\eta^{(i)}_2,\ldots,\eta^{(i)}_{n_i-1}\}$ denote the $n_i-1$ distinct eigenvalues of $B_i(1)$, and note that they strictly interlace the $\gamma^{(i)}$ eigenvalues by the interlacing inequalities (Theorem \ref{interlacing}). Explicitly we have
\begin{equation}\label{binterlacing}
\gamma^{(i)}_1< \eta^{(i)}_1< \gamma^{(i)}_2 < \cdots < \eta^{(i)}_{n_i-1} <\gamma^{(i)}_{n_i}. 
\end{equation}
Since the determinant is the product of all the eigenvalues, we also have the expansions
$$\det(tI-B_i) = \prod_{j=1}^{n_i}\left( t - \gamma^{(i)}_j\right), \quad \det(tI-B_i(1)) = \prod_{j=1}^{n_i-1}\left( t - \eta^{(i)}_j\right). $$

Expanding the characteristic polynomial of $A$ as a function of the matrix entries is complicated leads to a Jacobian where every entry is a sum over products, from which we were unable to prove nonsingularity. Instead, \textit{our main technical innovation is to recognize that a real symmetric tridiagonal matrix is completely specified (up to signature similarity) by its eigenvalues and the eigenvalues of its principal submatrix}. The exact statement of the result is provided by Theorem \ref{JacSpectral}. Therefore, instead of expanding the characteristic polynomial of $A$ in terms of the matrix entries, we expand it in terms of the eigenvalues of each $B_i$ and $B_i(1)$, along with the matrix entries $\{a_0,a_1,\ldots,a_k\}$. This represents a smooth change of variables from the matrix regarded as a multivariable function of the matrix entries. We begin by applying the neighbors formula, expanding around the central vertex, and rewriting in our block matrix notation:

\begin{align*}
 p_A(t) &= (t-a_{0}) \prod_{j=1}^k p_{B_j}(t) -\sum_{j=1}^k a_{j}^2p_{B_j(1)}(t)\prod_{ \substack{\ell=1 \\ \ell\neq j }  }^k p_{B_\ell}(t) \\
&=  (t-a_{0}) \prod_{j=1}^k \prod_{m=1}^{n_j}\left( t - \gamma^{(j)}_m\right)-\sum_{j=1}^k a_{j}^2  \prod_{m=1}^{n_j-1}\left( t - \eta^{(j)}_m\right)   \times   \prod_{ \substack{\ell=1 \\ \ell \neq j }  }^k   \prod_{m=1}^{n_\ell}\left( t - \gamma^{(\ell)}_m\right).
\end{align*}

We had begun with $A \in M_n(\mathbb{R})$ with $n = 1 +k + \sum_{i=1}^k (|B_i|-1)$, and now we will form a $(u+1)\times (u+1)$ Jacobian matrix. Consider an arbitrary arm; some number of distinct upward eigenvalues were initially assigned to this arm. We associate at most that many non-upward eigenvalues to this arm. We associate $u$ non-upward eigenvalue constraints to arms while respecting this limitation. Considering an arbitrary arm, $B_i$, to which we have associated $\ell_i$ eigenvalue constraints. 

We then form the Jacobian with respect to $a_i$ and $\ell_i-1$ eigenvalues of $B_i(1)$, which without loss of generality we relabel as $\{\eta^{(i)}_1,\ldots,\eta^{(i)}_{\ell_i-1}\}$. We also always form the Jacobian with respect to $a_0$, which accounts for the last eigenvalue constraint. \textit{This gives us the $u+1$ variables with which we form the Jacobian}.

Our Jacobian is of the initial form

\[
J:= \kbordermatrix{
    & a_0 & & a_1 \thinspace\thinspace\thinspace B_1(1) &  & &   &a_k \thinspace\thinspace\thinspace B_k(1)\\
    \lambda_1 & -\prod_{j=1}^k p_{B_j}(\lambda_1)  &\vrule &  & \vrule &  & \vrule \\
    \lambda_3 & -\prod_{j=1}^k p_{B_j}(\lambda_3)&\vrule & & \vrule& &\vrule \\
    \vdots & \vdots  &\vrule & M_1 & \vrule & \cdots & \vrule &M_k \\
    \vdots & \vdots  &\vrule & & \vrule & &\vrule \\
    \lambda_{2u+1} & -\prod_{j=1}^k p_{B_j}(\lambda_{2u+1})  &\vrule  & &\vrule&&\vrule \\
  },
\]
in which each $M_i$ is the following $(u+1)\times \ell_i$ matrix

\[
M_i= \kbordermatrix{
     & a_1 &  &  B_i(1) & & \\
    \lambda_1   & -2a_1 p_{B_i(1)}(\lambda_1)\prod_{ \substack{\ell=1 \\ \ell \neq i }  }^k p_{B_\ell}(\lambda_1) & - \frac{a_1^2 p_{B_i(1)}(\lambda_1)}{\eta^{(i)}_1-\lambda_1}\prod_{ \substack{\ell=1 \\ \ell \neq i}  }^k p_{B_\ell}(\lambda_1)  & \cdots &- \frac{a_1^2 p_{B_i(\ell)}(\lambda_1)}{\eta^{(i)}_{\ell_i-1} - \lambda_1} (\lambda_1)\prod_{ \substack{\ell=1 \\ \ell \neq i}  }^k p_{B_\ell}(\lambda_1)   \\
    \lambda_3 & \vdots & \vdots & \vdots & \vdots \\
    \vdots   & \vdots & \vdots & \vdots & \vdots \\
    \lambda_{2u+1}   & \vdots & \vdots & \vdots & \vdots \\
  }.
\]

Each subsequent row will be a copy of the first, with $\lambda_1$ replaced by $\lambda_j, j=3,5,\ldots,2u+1$. Because of this, for the rest of this proof we will abuse notation and describe all the column operations by their action on the first row. We also let $\sim$ denote an operation which preserves nonsingularity of the Jacobian.

Note that \textit{a priori} each $M_i$ is a rectangular matrix without a well-defined determinant. However, for simplicity we will refer to its determinant, which refers instead to the determinant of the full Jacobian $J$. Since multiplying rows and columns by nonzero constants preserves singularity and nonsingularity, we divide each row in our Jacobian by $\prod_{ \substack{\ell=1 }  }^k p_{B_\ell}(\lambda_j)$. We then divide out by nonzero prefactors such as negative signs, constants, and $a_i$. This gives us the renormalized (square) Jacobian 
\[
 \kbordermatrix{
    & a_0 & & a_1 \thinspace\thinspace\thinspace B_1(1) &  & &   &a_k \thinspace\thinspace\thinspace B_k(1)\\
    \lambda_1 & 1 &\vrule &  & \vrule &  & \vrule \\
    \lambda_3 & 1 &\vrule & & \vrule& &\vrule \\
    \vdots & \vdots  &\vrule & \widetilde{M_1} & \vrule & \cdots & \vrule &\widetilde{M_k} \\
    \vdots & \vdots  &\vrule & & \vrule & &\vrule \\
    \lambda_{2u+1} & 1  &\vrule  & &\vrule&&\vrule \\
  },
\]
with
\[
\widetilde{M_i}= \kbordermatrix{
     & a_1 &\vrule &  &  B_i(1) & & \\
    \lambda_1   & \frac{ p_{B_i(1)}(\lambda_1)}{ p_{B_i}(\lambda_1) }  &\vrule &\frac{1}{\lambda_1-\eta^{(i)}_1} \frac{ p_{B_i(1)}(\lambda_1)}{ p_{B_i}(\lambda_1) }  & \cdots & \frac{ 1}{ \lambda_1- \eta^{(i)}_{l_i-1} }  \frac{ p_{B_i(1)}(\lambda_1)}{ p_{B_i}(\lambda_1) }   \\
    \lambda_3 & \vdots &\vrule& \vdots & \vdots & \vdots \\
    \vdots   & \vdots  &\vrule&\vdots & \vdots & \vdots \\
    \lambda_{2u+1}   & \frac{ p_{B_i(1)}(\lambda_{2u+1})}{ p_{B_i}(\lambda_{2u+1}) }  &\vrule &\frac{1}{\lambda_{2u+1}-\eta^{(i)}_1} \frac{ p_{B_i(1)}(\lambda_{2u+1})}{ p_{B_i}(\lambda_{2u+1}) }  & \cdots & \frac{ 1}{\lambda_{2u+1}-\eta^{(i)}_{\ell_i-1} }  \frac{ p_{B_i(1)}(\lambda_{2u+1})}{ p_{B_i}(\lambda_{2u+1}) }   \\
  }.
\]

The ratio of characteristic polynomials simplifies as
$$\frac{ p_{B_i(1)}(\lambda)}{ p_{B_i}(\lambda) }  = \frac{ \prod_{k=1}^{n_i-1} \left(\lambda- \eta^{(i)}_k\right)   }{\prod_{k=1}^{n_i} \left(\lambda- \hat{\mu}^{(i)}_k\right) }. $$
We now denote (note the different domain of the product) $$ \beta_i(\lambda) :=    \frac{ \prod_{k=\ell_i}^{n_i-1} \left(\lambda- \eta^{(i)}_k\right)   }{\prod_{k=\ell_1+1}^{n_i} \left(\lambda- \hat{\mu}^{(i)}_k\right) }.$$ 

\textbf{Example}. A matrix with the shape of $\mathcal{T}_7$ has the form
$$
\begin{bmatrix}
   {\color{blue}a_{0} }& {\color{blue}a_{1} } & {\color{blue}a_{2} } & 0 & {\color{blue}a_{3} } & 0 &0 \\
   {\color{blue}a_{1} } & \hat{\mu}_2 & 0 & 0 & 0 & 0 &0 \\
   {\color{blue}a_{2} } & 0 & \hat{\mu}_2 & {\alpha } & 0 & 0 &0 \\
   {0} & 0 & {\alpha } & \hat{\mu}_6 & 0 & 0 &0 \\
   {\color{blue}a_{3} } & 0 & 0 & 0 & \hat{\mu}_2 & {\color{blue}\beta } &0 \\
   {0} & 0 & 0 & 0 & {\color{blue}\beta } & \hat{\mu}_4 &{\gamma } \\
   {0} & 0 & 0 & 0 & 0 & {\gamma } &\hat{\mu}_8 \\
\end{bmatrix},
$$
with blue text corresponding to variables we construct a Jacobian with respect to, in terms of matrix entries. Note that each arm cannot be assigned an upwards eigenvalue with multiplicity greater than one, by Theorem \ref{gstarmult}. In terms of the underlying assignment of Figure \ref{fig2}, each $\hat{\mu}_{2i}$ can be associated with a unique arm.

We now compose this map with the smooth change of variables $(\alpha, {\color{blue}\beta}, \gamma) \to (\eta^{(2)}_1,{\color{blue}\eta^{(3)}_1},\eta^{(3)}_2)$, mapping subdiagonal entries to eigenvalues of the principal submatrix, and will construct a Jacobian with respect to $\{{\color{blue}a_0,a_1,a_2,a_3,\eta^{(3)}_1 }\}$. In terms of eigenvalues, the characteristic polynomials expands as 
\begin{align*}
    p(t) &= (t-{\color{blue}a_{0} }) (t-\hat{\mu}_2)^3(t-\hat{\mu}_4)(t-\hat{\mu}_6)(t-\hat{\mu}_8) \\
    &\quad -{\color{blue}a_{1}}^2 (t-\hat{\mu}_2)^2(t-\hat{\mu}_4)(t-\hat{\mu}_6)(t-\hat{\mu}_8)  \\
    &\quad -{\color{blue}a_{2} }^2 (t-\eta_1^{(2)})(t-\hat{\mu}_2)(t-\hat{\mu}_6) \\
    &\quad -{\color{blue}a_{3} }^2 (t-{\color{blue}\eta_1^{(3)}})(t-\eta_2^{(3)})(t-\hat{\mu}_2)^2(t-\hat{\mu}_4)(t-\hat{\mu}_8).
    \end{align*}
We have fixed $\{\hat{\mu}_2,\hat{\mu}_4,\hat{\mu}_6,\hat{\mu}_8 \}$, as this is part of our desired spectrum, so we cannot perturb these eigenvalues.

Label the shortest arm as arm $1$, the length $2$ arm as $2$, and the length $3$ arm as $3$. We then have several parameters attached to each arm:
\begin{itemize}
\item Arm 1: $n_1=\ell_1=1, \beta_1(\lambda) = 1$ (empty product);
\item Arm 2: $n_2=2, \ell_2=1, \beta_2(\lambda) = \frac{\lambda-\eta^{(2)}_1}{\lambda-\hat{\mu}_2}$;
\item Arm 3: $n_3=3, \ell_3=2, \beta_3(\lambda) = \frac{\lambda-\eta^{(3)}_2}{\lambda-\hat{\mu}_2}$.
\end{itemize}
We can then express the Jacobian of $\mathcal{T}_7$ in terms of eigenvalues as

\begin{align*}
& \kbordermatrix{
     &  &  \text{Arm 1}& \text{Arm 2} & \hspace{3cm} \text{Arm 3}  & \\
    \lambda_1   & -1  & \frac{-2a_1}{\lambda_1 - \hat{\mu}_2} & \frac{-2a_2(\lambda_1-\eta^{(2)}_1)}{(\lambda_1 - \hat{\mu}_2)(\lambda_1 - \hat{\mu}_6)} & \frac{-2a_3(\lambda_1-\eta^{(3)}_1)(\lambda_1-\eta^{(3)}_2)}{(\lambda_1 - \hat{\mu}_2)(\lambda_1 - \hat{\mu}_4)(\lambda_1 - \hat{\mu}_8)} & \frac{(\lambda_1-\eta^{(3)}_2)}{(\lambda_1 - \hat{\mu}_2)(\lambda_1 - \hat{\mu}_4)(\lambda_1 - \hat{\mu}_8)}\\
    \lambda_3 & \vdots & \vdots & \vdots & \vdots & \vdots \\
    \lambda_5 & \vdots & \vdots & \vdots & \vdots & \vdots \\
    \lambda_7 & \vdots & \vdots & \vdots & \vdots & \vdots \\
    \lambda_{9} & \vdots  &\cdots & \cdots & \cdots & \vdots \\
  }  \\
  &\sim_{\text{divide out constants}} \kbordermatrix{
     &  & &  &   & \\
    & 1  & \frac{1}{\lambda_1 - \hat{\mu}_2} & \frac{(\lambda_1-\eta^{(2)}_1)}{(\lambda_1 - \hat{\mu}_2)(\lambda_1 - \hat{\mu}_6)} & \frac{(\lambda_1-\eta^{(3)}_1)(\lambda_1-\eta^{(3)}_2)}{(\lambda_1 - \hat{\mu}_2)(\lambda_1 - \hat{\mu}_4)(\lambda_1 - \hat{\mu}_8)} & \frac{(\lambda_1-\eta^{(3)}_2)}{(\lambda_1 - \hat{\mu}_2)(\lambda_1 - \hat{\mu}_4)(\lambda_1 - \hat{\mu}_8)}\\
  }  \\
&=_{\text{definition of $\beta$}} \kbordermatrix{
         &  & &  &   & \\
       & 1  & \frac{1}{\lambda_1 - \hat{\mu}_2} & \beta_2(\lambda_1)\frac{1}{\lambda_1 - \hat{\mu}_6} & \beta_3(\lambda_1)\frac{(\lambda_1-\eta^{(3)}_1)}{(\lambda_1 - \hat{\mu}_4)(\lambda_1 - \hat{\mu}_8)}& \beta_3(\lambda_1)\frac{1}{(\lambda_1 - \hat{\mu}_4)(\lambda_1 - \hat{\mu}_8)}\\
  } . \\
\end{align*}

Each other row of our Jacobian contains the same expression with $\lambda_1$ replaced by $\lambda_3, \lambda_5, \lambda_7, \lambda_9. $

\textbf{Claim 1.}
Rectangular matrix $\widetilde{M_i}$ has $\ell_i$ columns. We can reduce the numerator of the non-$\beta$ factor in column $j$ to $\prod_{k=j}^{\ell_i-1} \left(\lambda_i - \eta_k^{(i)}\right)$ by sweeping right.

We take the second column in $\widetilde{M_i}$ and subtract it from each subsequent column (but not the first). For column $j$, we obtain 
$$\beta_i(\lambda) \prod_{\substack{k=1 \\ k \neq j}}^{\ell_i-1 } \left(\lambda- \eta^{(i)}_k\right) - 
\beta_i(\lambda) \prod_{\substack{k=1 \\ k \neq 1}}^{\ell_i-1 } \left(\lambda- \eta^{(i)}_k\right)  =
  \left( \eta_1^{(i)}-\eta_j^{(i)}\right) \beta_i(\lambda) \prod_{\substack{k=1 \\ k \neq 1, j}}^{\ell_i-1 } \left(\lambda- \eta^{(i)}_k\right). $$
Since $ \left( \eta_1^{(i)}-\eta_j^{(i)}\right)$ is a constant down the column, and is nonzero since the $\eta$'s \textit{strictly} interlace the $\hat{\mu}$'s, we can factor $\prod_{k=1}^{\ell_i-1}\left( \eta_1^{(i)}-\eta_k^{(i)}\right)$ from the entire determinant while preserving singularity or nonsingularity. Note that we cannot factor out $\beta_i$ since it explicitly depends on $\lambda$. Iterating this process by subtracting the new third column from each subsequent column, and so on, yields the following simplified submatrix:

\[
\widetilde{M_i}\sim \kbordermatrix{
     & a_1 &  &  B_i(1) & & \\
    \lambda_1   & \beta_i(\lambda_1)  \frac{ \prod_{k=1}^{\ell_i-1} \left(\lambda_1- \eta^{(i)}_k\right)   }{\prod_{k=1}^{\ell_i} \left(\lambda_1- \hat{\mu}^{(i)}_k\right) }        & \beta_i(\lambda_1)  \frac{ \prod_{k=2}^{\ell_i-1} \left(\lambda_1- \eta^{(i)}_k\right)   }{\prod_{k=1}^{\ell_i} \left(\lambda_1- \hat{\mu}^{(i)}_k\right) }  & \cdots & \beta_i(\lambda_1)  \frac{1   }{\prod_{k=1}^{\ell_i} \left(\lambda_1- \hat{\mu}^{(i)}_k\right) }\\
    \lambda_3 & \vdots & \vdots & \vdots & \vdots \\
    \vdots   & \vdots  &\vdots & \vdots & \vdots \\
    \lambda_{2u+1} & \vdots  &\vdots & \cdots & \vdots \\
  }.
\]

\textbf{Example}. 
Our example $\mathcal{T}_7$ is simple enough that we do not have to perform any sweeps right. However, the operations performed on $\mathcal{T}_7$ in Claim $2$ highlight the same procedure.

\textbf{Claim 2}.
Rectangular matrix $\widetilde{M_i}$ has $\ell_i$ columns. We can reduce the numerator of the non-$\beta$ factor in column $j$ in $\widetilde{M_i}$ to $\lambda^{\ell_i-j}$ by repeated sweeps left.

Regarding the numerators as polynomials in $\lambda$, the right-most column has numerator degree $0$, the next has degree $1$, and so on. Hence we can sweep left, subtracting appropriate multiples of the right-most column from every other column so that the constant terms in the numerator are zero. We can iterate this process, subtracting appropriate multiples of the second right-most column from column to its left. In general, everything except the leading term can be cancelled, leaving

\[
\widetilde{M_i}\sim \kbordermatrix{
   & \\
    \lambda_1   & \beta_i(\lambda_1)  \frac{ \lambda_1^{\ell_1-1} }{\prod_{k=1}^{\ell_i} \left(\lambda_1- \hat{\mu}^{(i)}_k\right) }        & \beta_i(\lambda_1)  \frac{\lambda_1^{\ell_1-2} }{\prod_{k=1}^{\ell_i} \left(\lambda_1- \hat{\mu}^{(i)}_k\right) }  & \cdots & \beta_i(\lambda_1)  \frac{ 1   }{\prod_{k=1}^{\ell_i} \left(\lambda_1- \hat{\mu}^{(i)}_k\right) }\\
    \lambda_3 & \vdots & \vdots & \vdots & \vdots \\
    \vdots   & \vdots  &\vdots & \vdots & \vdots \\
    \lambda_{2u+1} & \vdots  &\vdots & \cdots & \vdots \\
  }.
\]
We can now sweep left again, creating an arbitrary $\lambda$ polynomial of degree $j$ in column $j$ by adding appropriate multiples of columns to the right. We will in fact put $  \prod_{k=1}^{\ell_i-j} \left(\lambda_1- \hat{\mu}^{(i)}_k\right) $ in the numerator, which will cancel with partial denominator products. This finally gives the much simpler matrix
\[
\widetilde{M_i}\sim \kbordermatrix{
   & \\
    \lambda_1   &   \frac{ \beta_i(\lambda_1) }{\prod_{k=\ell_i}^{\ell_i} \left(\lambda_1- \hat{\mu}^{(i)}_k\right) }        &   \frac{\beta_i(\lambda_1) }{\prod_{k=\ell_i-1}^{\ell_i} \left(\lambda_1- \hat{\mu}^{(i)}_k\right) }  & \cdots &  \frac{ \beta_i(\lambda_1)   }{\prod_{k=1}^{\ell_i} \left(\lambda_1- \hat{\mu}^{(i)}_k\right) }\\
    \lambda_3 & \vdots & \vdots & \vdots & \vdots \\
    \vdots   & \vdots  &\vdots & \vdots & \vdots \\
    \lambda_{2u+1} & \vdots  &\vdots & \cdots & \vdots \\
  }.
\]

\textbf{Example}. By repeated sweeps left, we reduce the Jacobian for $\mathcal{T}_7$ to 

\begin{align*}
&\kbordermatrix{
     &  &  \text{Arm 1}& \text{Arm 2} & \hspace{3cm} \text{Arm 3}  & \\
       & 1  & \frac{1}{\lambda_1 - \hat{\mu}_2} & \beta_2(\lambda_1)\frac{1}{\lambda_1 - \hat{\mu}_6} & \beta_3(\lambda_1)\frac{(\lambda_1-\eta^{(3)}_1)}{(\lambda_1 - \hat{\mu}_4)(\lambda_1 - \hat{\mu}_8)}& \beta_3(\lambda_1)\frac{1}{(\lambda_1 - \hat{\mu}_4)(\lambda_1 - \hat{\mu}_8)}\\
  } . \\
 &\sim_{\text{sweep left}} \kbordermatrix{
     &  & & &  & \\
       & 1  & \frac{1}{\lambda_1 - \hat{\mu}_2} & \beta_2(\lambda_1)\frac{1}{\lambda_1 - \hat{\mu}_6} & \beta_3(\lambda_1)\frac{(\lambda_1-\eta^{(3)}_1)}{(\lambda_1 - \hat{\mu}_4)(\lambda_1 - \hat{\mu}_8)}+\eta_1^{(3)}\beta_3(\lambda_1)\frac{1}{(\lambda_1 - \hat{\mu}_4)(\lambda_1 - \hat{\mu}_8)}& \beta_3(\lambda_1)\frac{1}{(\lambda_1 - \hat{\mu}_4)(\lambda_1 - \hat{\mu}_8)}\\
}.\\
&= \kbordermatrix{
     &  & & &  & \\
       & 1  & \frac{1}{\lambda_1 - \hat{\mu}_2} & \beta_2(\lambda_1)\frac{1}{\lambda_1 - \hat{\mu}_6} & \beta_3(\lambda_1)\frac{\lambda_1}{(\lambda_1 - \hat{\mu}_4)(\lambda_1 - \hat{\mu}_8)}& \beta_3(\lambda_1)\frac{1}{(\lambda_1 - \hat{\mu}_4)(\lambda_1 - \hat{\mu}_8)}\\
}.\\
&\sim_{\text{sweep left}} \kbordermatrix{
     &  & & &  & \\
       & 1  & \frac{1}{\lambda_1 - \hat{\mu}_2} & \beta_2(\lambda_1)\frac{1}{\lambda_1 - \hat{\mu}_6} & \beta_3(\lambda_1)\frac{\lambda_1}{(\lambda_1 - \hat{\mu}_4)(\lambda_1 - \hat{\mu}_8)} -\hat{\mu}_4\beta_3(\lambda_1)\frac{1}{(\lambda_1 - \hat{\mu}_4)(\lambda_1 - \hat{\mu}_8)}& \beta_3(\lambda_1)\frac{1}{(\lambda_1 - \hat{\mu}_4)(\lambda_1 - \hat{\mu}_8)}\\
}.\\
&= \kbordermatrix{
     &  & & &  & \\
       & 1  & \frac{1}{\lambda_1 - \hat{\mu}_2} & \beta_2(\lambda_1)\frac{1}{\lambda_1 - \hat{\mu}_6} & \beta_3(\lambda_1)\frac{1}{(\lambda_1 - \hat{\mu}_8)}& \beta_3(\lambda_1)\frac{1}{(\lambda_1 - \hat{\mu}_4)(\lambda_1 - \hat{\mu}_8)}\\
}.\\
\end{align*}

\textbf{Claim 3}.
We can reduce the degree of the denominator of the non-$\beta_i$ factors to $1$ by repeated sweeps right.

In column $j$, we now employ the partial fraction decomposition
$$ \frac{\beta_i(\lambda_1) }{\prod_{k=\ell_i-j+1}^{\ell_i} \left(\lambda_1- \hat{\mu}^{(i)}_k\right) }=  \beta_i(\lambda_1) \sum_{k=\ell_1-j+1}^{\ell_i}\frac{ 1}{\prod_{\substack{m=\ell_i-j+1 \\ m\neq k}}^{\ell_i} \left(\hat{\mu}^{(i)}_k- \hat{\mu}^{(i)}_m\right) } \frac{1}{ \lambda_1- \hat{\mu}^{(i)}_k},$$
where each constant in front of $ \frac{1}{ \lambda_1- \hat{\mu}^{(i)}_k}$ is nonzero since an upward eigenvalue cannot be assigned to an arm twice. Since the left-most column has a single term in its partial fraction expansion, we can use it to eliminate any subsequent occurrence of $ \frac{1}{ \lambda_1- \hat{\mu}^{(i)}_1}$ by subtracting appropriate multiples of the first column from subsequent columns. Iterating this process from left to right and dividing out by the nonzero constants in the partial fraction expansion gives the further simplified matrix 
\[
\widetilde{M_i}\sim \kbordermatrix{
   & \\
    \lambda_1   &   \frac{ \beta_i(\lambda_1) }{\lambda_1- \hat{\mu}^{(i)}_{\ell_i} }     &   \frac{ \beta_i(\lambda_1) }{\lambda_1- \hat{\mu}^{(i)}_{\ell_i-1} }    & \cdots &   \frac{ \beta_i(\lambda_1) }{\lambda_1- \hat{\mu}^{(i)}_{1} }   \\
    \lambda_3 & \vdots & \vdots & \vdots & \vdots \\
    \vdots   & \vdots  &\vdots & \vdots & \vdots \\
    \lambda_{2u+1} & \vdots  &\vdots & \cdots & \vdots \\
  }.
\]

\textbf{Example}. For $\mathcal{T}_7$, we obtain
\begin{align*}
    & \kbordermatrix{
     &  &  \text{Arm 1}& \text{Arm 2} & \hspace{3cm} \text{Arm 3}  & \\
       & 1  & \frac{1}{\lambda_1 - \hat{\mu}_2} & \beta_2(\lambda_1)\frac{1}{\lambda_1 - \hat{\mu}_6} & \beta_3(\lambda_1)\frac{1}{(\lambda_1 - \hat{\mu}_8)}& \beta_3(\lambda_1)\frac{1}{(\lambda_1 - \hat{\mu}_4)(\lambda_1 - \hat{\mu}_8)}\\
}.\\
&=_{\text{partial fraction}} \kbordermatrix{
     &  &  &  &  & \\
       & 1  & \frac{1}{\lambda_1 - \hat{\mu}_2} & \beta_2(\lambda_1)\frac{1}{\lambda_1 - \hat{\mu}_6} & \beta_3(\lambda_1)\frac{1}{(\lambda_1 - \hat{\mu}_8)}& \frac{\beta_3(\lambda_1)}{(\hat{\mu_4}-\hat{\mu_8})}\left[\frac{1}{(\lambda_1 - \hat{\mu_4})}-\frac{1}{(\lambda_1 - \hat{\mu_8})} \right]\\
}.\\
&\sim_{\text{divide out}} \kbordermatrix{
     &  &  &  &  & \\
       & 1  & \frac{1}{\lambda_1 - \hat{\mu}_2} & \beta_2(\lambda_1)\frac{1}{\lambda_1 - \hat{\mu}_6} & \beta_3(\lambda_1)\frac{1}{(\lambda_1 - \hat{\mu}_8)}& \beta_3(\lambda_1)\left[\frac{1}{(\lambda_1 - \hat{\mu_4})}-\frac{1}{(\lambda_1 - \hat{\mu_8})} \right]\\
}.\\
&\sim_{\text{sweep right}} \kbordermatrix{
     &  &  &  &  & \\
       & 1  & \frac{1}{\lambda_1 - \hat{\mu}_2} & \beta_2(\lambda_1)\frac{1}{\lambda_1 - \hat{\mu}_6} & \beta_3(\lambda_1)\frac{1}{(\lambda_1 - \hat{\mu}_8)}& \beta_3(\lambda_1)\frac{1}{(\lambda_1 - \hat{\mu_4})}\\
}.\\
\end{align*}

\textbf{Claim 4}. We can eliminate the $\beta_i$ terms while preserving nonsingularity.

We finally stop regarding $\beta_i(\lambda_1)$ as a constant and partial fraction each entry again, giving 
$$ \frac{\beta_i(\lambda_1) }{\lambda_1- \hat{\mu}^{(i)}_j }=   \sum_{k=\ell_i+1}^{n_i}\frac{ \prod_{\substack{m=\ell_i }}^{n_i-1} \left(\hat{\mu}^{(i)}_k- \eta^{(i)}_m\right) }{\prod_{\substack{m=\ell_i+1 \\ m\neq k}}^{n_i} \left(\hat{\mu}^{(i)}_k- \hat{\mu}^{(i)}_m \right) } \frac{1}{\hat{\mu}^{(i)}_k - \hat{\mu}^{(i)}_j} \frac{1}{ \lambda_1- \hat{\mu}^{(i)}_k} +  
\frac{ \prod_{\substack{m=\ell_i }}^{n_i-1} \left(\hat{\mu}^{(i)}_j- \eta^{(i)}_m\right) }{\prod_{\substack{m=\ell_i+1 \\ m\neq k}}^{n_i} \left(\hat{\mu}^{(i)}_j- \hat{\mu}^{(i)}_m \right) } \frac{1}{ \lambda_1- \hat{\mu}^{(i)}_j}  ,$$
where each constant in the expansion is nonzero by strict interlacing and uniqueness of the eigenvalues under consideration. We now consider the entire Jacobian
\[
 \kbordermatrix{
    & a_0 & & a_1 \thinspace\thinspace\thinspace B_1(1) &  & &   &a_k \thinspace\thinspace\thinspace B_k(1)\\
    \lambda_1 & 1 &\vrule &  & \vrule &  & \vrule \\
    \lambda_3 & 1 &\vrule & & \vrule& &\vrule \\
    \vdots & \vdots  &\vrule & \widetilde{M_1} & \vrule & \cdots & \vrule &\widetilde{M_k} \\
    \vdots & \vdots  &\vrule & & \vrule & &\vrule \\
    \lambda_{2u+1} & 1  &\vrule  & &\vrule&&\vrule \\
  }.
\]
 As previously noted, in terms of the underlying assignment of Figure \ref{fig2}, each $\hat{\mu}_{2i}$ can be associated with a unique $\lambda_{2i+1}$. This ultimately follows from Theorem \ref{gstarmult}, which gives the allowable spectra for generalized stars. There are $u$ of these associations, along with another column of all $1$s, accounting for all $u+1$ columns. Then, $\{1, \frac{1}{\lambda - \hat{\mu}_{2}}, \frac{1}{\lambda - \hat{\mu}_{4}}, \ldots, \frac{1}{\lambda - \hat{\mu}_{2u}}\}$ forms a basis for the column space, and the nonsingularity of our Jacobian is equivalent to the nonsingularity of the following $(u+1)\times (u+1)$ matrix:

\[
 \kbordermatrix{
    & a_0 &  \hat{\mu}_2& \hat{\mu}_4 & \ldots & \hat{\mu}_{2u}\\
    \lambda_1 & 1 & \frac{1}{\lambda_1 -\hat{\mu}_2} &  \frac{1}{\lambda_1 -\hat{\mu}_4} &\cdots & \frac{1}{\lambda_1 -\hat{\mu}_{2u}}\\
    \lambda_3 & 1 & \vdots & & & \vdots \\
    \vdots & 1 & \vdots & & & \vdots \\
    \lambda_{2u+1} & 1 & \frac{1}{\lambda_{2u+1} -\hat{\mu}_2} &  \frac{1}{\lambda_{2u+1} -\hat{\mu}_4} &\cdots & \frac{1}{\lambda_{2u+1} -\hat{\mu}_{2u}}\\
  }.
\]
\textbf{Example}. As previously noted, in terms of the underlying assignment of Figure \ref{fig2}, each $\hat{\mu}_{2i}$ can be associated with a unique column. This ultimately follows from Theorem \ref{gstarmult}, the allowable spectra for generalized stars. This accounts for the line where we associate a unique $\hat{\mu}_{2i}$. For $\mathcal{T}_7$, this means that we can create a correspondence between (Arm 1, Arm 2, Arm 3) $ \leftrightarrow (\hat{\mu}_{2}, \hat{\mu}_{6}, \{\hat{\mu}_{4},\hat{\mu}_{8}\} )$.  This accounts for the line where we associate a unique $\hat{\mu}_{2i}$ to each column. All other terms will be eliminated by taking the correct linear combinations of columns.

\begin{align*}
&\kbordermatrix{
     &  &  \text{Arm 1}& \text{Arm 2} & \hspace{3cm} \text{Arm 3}  & \\
       & 1  & \frac{1}{\lambda_1 - \hat{\mu}_2} & \beta_2(\lambda_1)\frac{1}{\lambda_1 - \hat{\mu}_6} & \beta_3(\lambda_1)\frac{1}{(\lambda_1 - \hat{\mu}_8)}& \beta_3(\lambda_1)\frac{1}{(\lambda_1 - \hat{\mu_4})}\\
}.\\
&=_{\text{definition of $\beta_i$}}\kbordermatrix{
     &  & & &  & \\
       & 1  & \frac{1}{\lambda_1 - \hat{\mu}_2} & \frac{\lambda_1-\eta^{(2)}_1}{\lambda_1-\hat{\mu}_2}\frac{1}{\lambda_1 - \hat{\mu}_6} & \frac{\lambda_1-\eta^{(3)}_2}{\lambda_1-\hat{\mu}_2}\frac{1}{\lambda_1 - \hat{\mu}_8}& \frac{\lambda_1-\eta^{(3)}_2}{\lambda_1-\hat{\mu}_2}\frac{1}{\lambda_1 - \hat{\mu}_4}\\
}.\\
&=_{\text{partial fraction}}\kbordermatrix{
     &  & & &  & \\
       & 1  & \frac{1}{\lambda_1 - \hat{\mu}_2} & \frac{1}{\hat{\mu}_2-\hat{\mu}_6}\left(\frac{\hat{\mu}_2-\eta^{(2)}_1}{\lambda_1 - \hat{\mu}_2}-\frac{ \hat{\mu}_6-\eta^{(2)}_1 }{\lambda_1 - \hat{\mu}_6}\right) & \frac{1}{\hat{\mu}_2-\hat{\mu}_8}\left(\frac{\hat{\mu}_2-\eta^{(3)}_2}{\lambda_1 - \hat{\mu}_2}-\frac{ \hat{\mu}_8-\eta^{(3)}_2 }{\lambda_1 - \hat{\mu}_6}\right)& \frac{1}{\hat{\mu}_2-\hat{\mu}_4}\left(\frac{\hat{\mu}_2-\eta^{(3)}_2}{\lambda_1 - \hat{\mu}_2}-\frac{ \hat{\mu}_8-\eta^{(3)}_2 }{\lambda_1 - \hat{\mu}_4}\right)\\
}.\\
&\sim_{\text{unique $\hat{\mu}_{2i}$ per column}}\kbordermatrix{
     &  & & &  & \\
       & 1  & \frac{1}{\lambda_1 - \hat{\mu}_2} & 
       -\frac{ \hat{\mu}_6-\eta^{(2)}_1 }{\hat{\mu}_2 - \hat{\mu}_6} \frac{1}{ \lambda_1 -\hat{\mu}_6 }
        & -\frac{ \hat{\mu}_8-\eta^{(3)}_2 }{\hat{\mu}_2 - \hat{\mu}_8} \frac{1}{ \lambda_1 -\hat{\mu}_8 }& -\frac{ \hat{\mu}_4-\eta^{(3)}_2 }{\hat{\mu}_2 - \hat{\mu}_4} \frac{1}{ \lambda_1 -\hat{\mu}_4 }\\
}.\\
&\sim_{\text{divide out constants}}\kbordermatrix{
     &  & & &  & \\
       & 1  & \frac{1}{\lambda_1 - \hat{\mu}_2} & \frac{1}{\lambda_1 - \hat{\mu}_6} & \frac{1}{\lambda_1 - \hat{\mu}_8}& \frac{1}{\lambda_1 - \hat{\mu}_4}\\
}.\\
&\sim_{\text{permute columns}}\kbordermatrix{
     &  & & &  & \\
       & 1  & \frac{1}{\lambda_1 - \hat{\mu}_2} & \frac{1}{\lambda_1 - \hat{\mu}_4} & \frac{1}{\lambda_1 - \hat{\mu}_6}& \frac{1}{\lambda_1 - \hat{\mu}_8}\\
}.\\
\end{align*}


\textbf{Claim 5}.
The Jacobian is nonsingular.

This determinant is a bordered variant of Cauchy's \textit{double alternant} \cite{Krattenthaler}. The following rescaled determinant has already been explicitly computed \cite{TwoAndrews}: 
$$
\det \begin{bmatrix}
    \prod_{   i=1}^{n-1}(a_i-x_1) & \prod_{\substack{i=1\\i\neq 1}}^{n-1}(a_i-x_1)  & \cdots  & \prod_{\substack{i=1\\i\neq n-1}}^{n-1}(a_i-x_1)  \\
    \vdots & \vdots  &  & \vdots \\
    \vdots & \vdots  &  & \vdots \\
    \prod_{   i=1}^{n-1}(a_i-x_n) & \prod_{\substack{i=1\\i\neq 1}}^{n-1}(a_i-x_n)  & \cdots  & \prod_{\substack{i=1\\i\neq n-1}}^{n-1}(a_i-x_n)  \\
\end{bmatrix} = \prod_{1\leq i < j \leq n-1}(a_i-a_j)  \prod_{1\leq i < j \leq n}(x_j-x_i).
$$
Since we have $\lambda_1< \hat{\mu}_2 < \lambda_2 < \cdots < \hat{\mu}_{2u} < \lambda_{2u+1}$ with \textit{strict} interlacing, we finally have that 
\begin{align*}
\det &\begin{bmatrix}
   1 & \frac{1}{\lambda_1 -\hat{\mu}_2} &  \frac{1}{\lambda_1 -\hat{\mu}_4} &\cdots & \frac{1}{\lambda_1 -\hat{\mu}_{2u}}\\
    1 & \vdots & & & \vdots \\
     1 & \vdots & & & \vdots \\
     1 & \frac{1}{\lambda_{2u+1} -\hat{\mu}_2} &  \frac{1}{\lambda_{2u+1} -\hat{\mu}_4} &\cdots & \frac{1}{\lambda_{2u+1} -\hat{\mu}_{2u}}\\
\end{bmatrix}   \\
&= \frac{(-1)^u}{\prod_{j=1}^{u+1}\prod_{i=1}^u (\lambda_{2j-1}-\hat{\mu}_{2i})}
\det \begin{bmatrix}
    \prod_{   i=1}^{u}(\hat{\mu}_{2i}-\lambda_1) & \prod_{\substack{i=1\\i\neq 1}}^{u}(\hat{\mu}_{2i}-\lambda_1)  & \cdots  & \prod_{\substack{i=1\\i\neq u}}^{u}(\hat{\mu}_{2i}-\lambda_1)  \\
    \vdots & \vdots  &  & \vdots \\
    \vdots & \vdots  &  & \vdots \\
    \prod_{   i=1}^{u}(\hat{\mu}_{2i}-\lambda_{2u+1}) & \prod_{\substack{i=1\\i\neq 1}}^{u}(\hat{\mu}_{2i}-\lambda_{2u+1})  & \cdots  & \prod_{\substack{i=1\\i\neq u}}^{u}(\hat{\mu}_{2i}-\lambda_{2u+1}) 
\end{bmatrix}   \\
&= \frac{(-1)^u \prod_{1\leq i < j \leq u}(\hat{\mu}_{2i}-\hat{\mu}_{2j})  \prod_{1\leq i < j \leq u+1}(\lambda_{2j-1}-\lambda_{2i-1})}{\prod_{j=1}^{u+1}\prod_{i=1}^u (\lambda_{2j-1}-\hat{\mu}_{2i})}\\
&\neq 0,
\end{align*}
so that our Jacobian is nonsingular.
\end{proof}

Using the implicit function theorem in this way, we may now give our primary result by verifying the sufficiency of the LSP for all linear trees. The necessity is from \cite{TwoAndrews}, and the sufficiency uses the implicit function theorem (in the same fashion as the partial result \cite[Theorem 19]{TwoAndrews}), but in a somewhat different way by viewing paths in the new way (spectrally) that we have described.
\begin{thm}\cite[Conjecture 10.2.3]{Johnson3}\label{lspthm}
An ordered multiplicity list occurs in $\mathcal{L}_0(T)$ for a linear tree $T$ if and only if that list can be obtained from the lists for the constituent paths and generalized stars of $T$ via the LSP.
\end{thm}
\begin{proof}
Necessity of the LSP was proven in \cite[Theorem 14]{TwoAndrews}. We have only provided a sketch of the proof of sufficiency, as it is exactly the proof of \cite[Theorem 19]{TwoAndrews}, with their use of \cite[Lemma 18]{TwoAndrews} replaced by our use of Lemma \ref{lem10}. The result in \cite[Theorem 19]{TwoAndrews} proves sufficiency for the depth $2$ case, where each arm is restricted to have length $1$.

For sufficiency, we use the implicit function theorem. Suppose the tabular form of the LSP has been completed for $T$, with valid multiplicity lists for the constituent paths and generalized stars of $T$. The initial matrix $A^{(0)}$ for our application of the implicit function theorem is the direct sum of several simpler matrices, those associated with the constituent paths and generalized stars of $T$ (whose multiplicity lists are those mentioned above). For each constituent generalized star, we know how to assign all relevant data (including any upward eigenvalues, of which there are none for paths). The edges connecting the constituent components correspond to the implicit function theorem variables. According to Lemma \ref{lem10}, the Jacobian at $A^{(0)}$ may be taken to be nonsingular. The conditions of Lemma \ref{lem8} have now been met. Hence, values of the connecting edge variables exist, according to the implicit function theorem, so that we may construct a matrix $A$ with graph $T$, whose spectrum is that given by the LSP.
\end{proof}
Since the eigenvalues used to verify the multiplicity lists in Theorem \ref{lspthm} were arbitrary, subject to the order of the eigenvalues, the corresponding inverse eigenvalue problem (IEP) is also solved. Note that this is not generally true for nonlinear trees \cite{Barioli}.
\begin{cor}[IEP for Linear Trees]
Given a linear tree $T$, an ordered list of multiplicities that occurs for $T$, and for any list of real eigenvalues respecting this ordered list, there is a real symmetric matrix in $\mathcal{S}(T)$ whose spectrum is the given list of eigenvalues.
\end{cor}

\section{Consequences of the LSP}\label{sec4}
The sufficiency of the LSP has a number of very interesting consequences, namely the Degree Conjecture, a result on subdivision of multiplicity lists, a result on augmentation of the underlying tree, a bound on the minimum number of ones among the multiplicity lists of $T$, and a formula for the maximum multiplicity of an eigenvalue in a multiplicity list. 


Let $\mathcal{L}(T)$ denote the set of unordered multiplicity lists for matrices whose graph is $T$. The Degree Conjecture asserts that a tree with $k$ high degree vertices with degrees $d_1,\ldots, d_k$ has $(d_1-1, d_2-1, \ldots, d_k-1, 1, 1, \ldots, 1) \in \mathcal{L}(T)$. 

\begin{cor}
The Degree Conjecture holds for linear trees.
\end{cor}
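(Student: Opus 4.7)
The plan is to invoke the sufficiency of the LSP just established, by constructing an LSP tabular form that yields the unordered multiplicity list $(d_1-1, \ldots, d_k-1, 1, \ldots, 1)$. Write $T = L(T_1, s_1, \ldots, s_{k-1}, T_k)$, let $a_i$ denote the degree in $T_i$ of the central vertex $v_i$, and set $\epsilon_i = d_i - a_i$, the number of connecting paths incident to $v_i$; so $\epsilon_i = 2$ for interior HDVs, $\epsilon_i = 1$ for $v_1$ and $v_k$ when $k \geq 2$, and $\epsilon_1 = 0$ when $k = 1$. The target sum at the ``big'' column $\gamma_i$ corresponding to the $i$-th HDV is thus $d_i - 1 = (a_i - 1) + \epsilon_i$.

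For each $i$ I would use the upwards multiplicity list $\hat{b}_i = (1, \widehat{a_i - 1}, 1, \ldots, 1)$: a single upwards multiplicity $a_i - 1$ flanked by non-upwards ones that bring the sum to $n_i$. The generalized star theorem from Section~\ref{sec:background} certifies that $\hat{b}_i$ is admissible for $T_i$: the entries sum to $n_i$; the upwards entry lies strictly interior to $\hat{b}_i$ with non-upwards neighbors; and the majorization condition collapses to the single inequality $a_i \leq a_i$, because the first part of the dual partition $(l_1^{(i)}, \ldots, l_{a_i}^{(i)})^*$ equals $a_i$. In the tableau I would fix columns $\gamma_1 < \gamma_2 < \cdots < \gamma_k$, place $b_i^+[\gamma_i] = \widehat{a_i - 1}$, and supply the $\epsilon_i$ additional non-upwards ones at column $\gamma_i$ from the adjacent rows: use $c_{i-1}^+[\gamma_i] = 1$ for the ``before $b_i^+$'' slot (if $s_{i-1} \geq 1$) and $c_i^+[\gamma_i] = 1$ for the ``after $b_i^+$'' slot (if $s_i \geq 1$); otherwise borrow a non-upwards one from $\hat{b}_{i \mp 1}$ placed in column $\gamma_i$, which is always possible because those lists carry non-upwards ones on both sides of their upwards entry. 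The remaining non-upwards ones are distributed one per column across the remaining columns, which will carry final multiplicity $1$; a direct count shows that the total number of non-upwards ones supplied by the $b_i^+$ and $c_j^+$ rows matches the number of $1$-columns exactly.

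The main obstacle will be verifying LSP condition~(4), which demands that in each column any two non-upwards ones be separated by an upwards multiplicity in the reading order $b_1^+, c_1^+, b_2^+, \ldots, b_k^+$. In column $\gamma_i$, the only upwards entry is $\widehat{a_i - 1}$ at $b_i^+$, so at most one non-upwards one may sit before $b_i^+$ and at most one after---matching the bound $\epsilon_i \leq 2$ exactly, which is why the construction above is feasible. Each $1$-column carries a single non-upwards one, rendering condition~(4) vacuous there. Conditions~(1)--(3) are immediate from the construction, and summing the tableau column by column yields an ordered multiplicity list whose unordered form is $(d_1 - 1, \ldots, d_k - 1, 1, \ldots, 1)$. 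By the sufficiency of the LSP, this list lies in $\mathcal{L}(T)$, establishing the Degree Conjecture for linear trees.
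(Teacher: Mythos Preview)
Your construction is essentially correct and is the natural direct argument: take $\hat b_i=(1,\widehat{a_i-1},1,\dots,1)$ for each generalized-star component, align the upwards entries in distinct columns $\gamma_1<\cdots<\gamma_k$, and feed each $\gamma_i$ the extra $\epsilon_i$ non-upwards ones from the adjacent path rows or, when $s_{i-1}$ or $s_i$ vanishes, from the single leading (respectively a trailing) non-upwards $1$ of $\hat b_{i\mp1}$. The count of leftover $1$'s matches the number of singleton columns, and condition~(4) holds in each $\gamma_i$ because exactly one non-upwards entry sits on each side of $\widehat{a_i-1}$. The only point your sketch leaves implicit is that the residual non-upwards $1$'s can actually be laid out in fresh columns while respecting the internal order of each $\hat b_i$ (the leading $1$ must land before $\gamma_i$, the trailing $1$'s after, and any borrowed trailing $1$ at $\gamma_{i+1}$ must be consistent with the positions of the others); a left-to-right placement along the linear order of the HDVs handles this, but it deserves a sentence rather than being absorbed into ``a direct count shows.''

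The paper's proof differs only in presentation: it simply cites \cite{TwoAndrews}, where the implication ``sufficiency of the LSP $\Rightarrow$ Degree Conjecture for linear trees'' was already established, so once the main theorem of Section~\ref{sec3} is available the corollary is immediate. Your version is self-contained and makes the LSP table explicit; the paper's version is a one-line appeal to prior work that contains, in effect, the construction you wrote down.
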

\begin{proof}
The work \cite{TwoAndrews} proves that any family of linear trees satisfying the sufficiency of the LSP conditions satisfies the Degree Conjecture. 
\end{proof}
We can also study the subdivision of multiplicity lists.
\begin{cor}
Let $T$ be a linear tree with $(m_1,m_2,\ldots,m_k)\in \mathcal{L}(T)$. Then, for any $j$ such that $m_j \geq 2, 1\leq j \leq k$, $$(m_1,\ldots,m_j-1,\ldots,m_k,1)\in \mathcal{L}(T).$$ 
\end{cor}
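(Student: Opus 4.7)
The plan is to invoke the main theorem on sufficiency of the LSP and show that the refined ordered list $(m_1,\ldots,m_{j-1},m_j-1,1,m_{j+1},\ldots,m_k)$ is itself produced by the LSP for the same linear tree. Given a valid LSP table for $(m_1,\ldots,m_k)$, I will insert a new column $j^{\ast}$ between columns $j$ and $j+1$ and transfer exactly one unit of multiplicity from column $j$ to $j^{\ast}$, so that the column sums become $(m_1,\ldots,m_{j-1},m_j-1,1,m_{j+1},\ldots,m_k)$.

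The construction splits into cases according to the structure of column $j$. First, if column $j$ contains some upwards multiplicity $\hat{q}$ with $q\geq 1$ in a row $b_i^{+}$, I replace that entry at column $j$ by $\widehat{q-1}$ and insert a non-upward $1$ at column $j^{\ast}$ of the same $b_i^{+}$; the modified $\hat{b}_i'$ still obeys the alternating condition (the altered upwards entry retains non-upward neighbors, one of which is the freshly inserted non-upward $1$), and its sorted sequence of upwards values only weakly decreases, so majorization against the dual arm partition $(l_1,\ldots,l_k)^{\ast}$ is retained. Otherwise, every upwards entry in column $j$ is an $\hat{0}$, and the column sum equals the count of non-upward ones, which must be at least $2$; in this situation I pick one such non-upward $1$ and transfer it to column $j^{\ast}$. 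If the chosen $1$ lies in a path row $c_i^{+}$, this transfer is trivial and preserves $|\hat{c}_i|=s_i$; if it lies in some $b_i^{+}$, I keep $\hat{b}_i$ itself unchanged but re-embed it into $b_i^{+}$ so that the non-upward $1$ previously at column $j$ now appears at $j^{\ast}$.

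The main obstacle is this last sub-case: moving a non-upward $1$ within $b_i^{+}$ would in general disturb the strictly increasing sequence of columns occupied by the $\hat{b}_i$-entries if the $1$ were relocated to an arbitrary new column. Placing $j^{\ast}$ immediately to the right of column $j$ sidesteps this, since the columns of the $\hat{b}_i$-entries on either side of the moved $1$ already sandwich $j$ and therefore also sandwich $j^{\ast}$, so no reordering of $\hat{b}_i$ is required. Once the edit is made, the four LSP conditions (common row length $k+1$, correct augmentation of each row, no column consisting solely of non-upward zeros, and separation of non-upward ones by upwards multiplicities) are inherited directly from the original table, and the main theorem produces a matrix in $\mathcal{S}(T)$ realizing the refined list, whose unordered form is precisely $(m_1,\ldots,m_j-1,\ldots,m_k,1)$.
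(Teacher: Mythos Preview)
Your argument is essentially correct and carries out explicitly what the paper does by citation. The paper's proof of this corollary is a single sentence invoking \cite[Theorem 9]{Buckley}, which had already shown that the subdivision property follows for any linear tree once the sufficiency direction of the LSP is known; the present paper supplies that sufficiency, and the corollary drops out. Your proof reconstructs the underlying table manipulation directly rather than quoting the reference.

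A couple of points deserve tightening. In Case~1, when the chosen upward entry has $q=1$, your replacement yields $\widehat{0}$ in the modified $\hat{b}_i'$; this is permitted by the paper's convention (stated just before Theorem~1 and used throughout Section~5), but you should say so, since a literal reading of condition~(1) in Theorem~1 would forbid it. Also in Case~1, after inserting the non-upward $1$ immediately to the right of $\widehat{q-1}$, the new $\hat{b}_i'$ reads $\dots,a,\widehat{q-1},1,b,\dots$ with two consecutive non-upward entries $1,b$; you note correctly that condition~(2) only forbids consecutive \emph{upward} entries, so this is harmless, but it would be cleaner to state that explicitly. Your majorization check is right: decreasing one upward value by~$1$ is a Robin~Hood move on $(q_{i_1}+1,\dots,q_{i_h}+1)$, so the new sequence is majorized by the old and hence still by $(l_1,\dots,l_k)^*$.

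In Case~2 you should also record that condition~(4) of the LSP survives in column~$j$: removing one non-upward $1$ from a column can only delete separation constraints, never create a violation. With these remarks made explicit, your construction is complete and matches in spirit the argument the paper outsources to \cite{Buckley}.
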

\begin{proof}
The paper \cite[Theorem 9]{Buckley} shows that this result follows for any linear tree satisfying the sufficiency of the LSP.
\end{proof}

We can prove a similar conjectured result concerning augmented multiplicity lists. Fix a tree $T$. Then we can augment $T$ to construct $T'$ in two ways: adding a pendant vertex or subdividing an edge by placing a vertex between its two endpoints. While constructing the set of ordered multiplicity lists $\mathcal{L}_0(T')$ is equivalent to the entire multiplicity list problem and an extremely subtle question, we can easily limit the set $\mathcal{L}_0(T')$. Let $\mathcal{L}^1_0(T)$ be the set of lists obtained by appending a $1$ to each list in $\mathcal{L}_0(T)$, and $\mathcal{L}^+_0(T)$ the set of lists obtained by adding a $1$ to each multiplicity list in $\mathcal{L}_0(T)$ in any possible way (including appending $1$'s). 
\begin{cor}\cite[Conjecture 11]{Buckley}
For any linear tree $T$, we have $$\mathcal{L}^1_0(T) \subseteq \mathcal{L}_0(T') \subseteq \mathcal{L}^+_0(T)$$ and
$$\mathcal{L}^1(T) \subseteq \mathcal{L}(T') \subseteq \mathcal{L}^+(T).$$
\end{cor}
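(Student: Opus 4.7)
The plan is to leverage the now-established equivalence between $\mathcal{L}_0(T)$ and LSP-generated lists, reducing both inclusions to purely combinatorial statements about LSP completion tables. The unordered inclusions $\mathcal{L}^1(T) \subseteq \mathcal{L}(T') \subseteq \mathcal{L}^+(T)$ then follow immediately by forgetting order. I will assume $T'$ is itself a linear tree (which holds for the relevant augmentations: extending an arm leaf, subdividing an edge, adding a pendant at the center of a generalized star, or modifying the central connecting path), so that the LSP applies to both $T$ and $T'$.

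For the inclusion $\mathcal{L}^1_0(T) \subseteq \mathcal{L}_0(T')$, I would start from an LSP table for $T = L(T_1,s_1,\ldots,T_k)$ producing a list $L$, and extend it to an LSP table for $T'$ producing $L$ with an appended $1$. The construction is to append a new right-most column associated with a fresh eigenvalue, placing a single non-upwards $1$ in the row of the component modified by the augmentation (with non-upwards zeros elsewhere). Subdividing or extending within an arm of $T_i$ increases $l_i$ by $1$, which only loosens the majorization bound $(l_1,\ldots,l_k)^*$; adding a pendant at the center of $T_i$ creates a new length-$1$ arm, enlarging the same partition by one box; and subdividing or extending on the path between $T_j$ and $T_{j+1}$ increments $s_j$, admitting one more non-upwards $1$ in the $c_j^+$ row. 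In each case the four LSP conditions are easily checked to survive.

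For the converse inclusion $\mathcal{L}_0(T') \subseteq \mathcal{L}^+_0(T)$, I would take an LSP table for $T'$ producing a list $L'$ and exhibit a removable non-upwards $1$ whose deletion yields a valid LSP table for $T$ producing some $L \in \mathcal{L}_0(T)$ with $L'$ an augmentation of $L$. Since $T'$ has exactly one more vertex than $T$, the sums across any completion table differ by $1$; the additional vertex either enlarges one of the partitions $(l_i^{(1)}, \ldots, l_i^{(r_i)})^*$ or enlarges a single $s_j$, and in either case a single non-upwards $1$ in the corresponding row can be pruned without violating the remaining constraints. This pruning corresponds exactly to deleting a $1$ from the unordered list or from an appropriate position in the ordered list, which is precisely what $\mathcal{L}^+_0$ allows.

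The main obstacle is the case analysis, particularly the subcase of adding a pendant at a g-star center, which alters both the degree of the central vertex and the arm-length partition $(l_1,\ldots,l_k)^*$. Verifying that the majorization condition in Johnson's theorem for generalized stars remains compatible with the assigned upwards multiplicities before and after this modification requires tracking how the dual partition shifts. The other augmentations only perturb existing arm lengths or connecting-path lengths by one and reduce to a direct verification of LSP conditions (2)--(4); condition (3) on separation of non-upwards ones by upward multiplicities is the one that most directly controls whether a pruned column can be removed or an appended column inserted, and it is here that one must invoke the freedom to place the extra eigenvalue outside the current spectrum.
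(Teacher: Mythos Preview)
Your overall strategy coincides with the paper's: reduce to ordered lists, note that $T'$ is again linear, and manipulate LSP completion tables in both directions. Your treatment of the first inclusion $\mathcal{L}_0^1(T) \subseteq \mathcal{L}_0(T')$ is correct and arguably cleaner than the paper's (the paper inserts $1\,\hat{0}$, $\hat{0}\,1$, or $\hat{0}\,1\,\hat{0}$ in the g-star row, whereas your single appended non-upward $1$ already suffices, since the g-star characterization does not forbid consecutive non-upward entries and the majorization bound only loosens).

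There is, however, a genuine gap in your argument for the reverse inclusion $\mathcal{L}_0(T') \subseteq \mathcal{L}_0^+(T)$. You claim that when $T'$ arises from $T$ by lengthening an arm of some $T_i$, ``a single non-upwards $1$ in the corresponding row can be pruned without violating the remaining constraints.'' This is not always possible. Take $T_i = K_{1,3}$ (arm lengths $(1,1,1)$, dual partition $(3)$) and let $T_i'$ have arm lengths $(2,1,1)$ (dual $(3,1)$). The list $\hat{b}_i' = (1,\hat{1},1,\hat{1},1)$ is a valid upward multiplicity list for $T_i'$, since $(2,2) \preceq (3,1)$. But deleting any of its three non-upward $1$'s forces either an upward entry into an endpoint or two upward entries to become adjacent, violating condition (2) of the g-star theorem. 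No non-upward $1$ is ``removable'' here.

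The paper handles precisely this situation with an operation your proposal omits: decrementing an upward multiplicity, $\hat{m} \mapsto \widehat{m-1}$ for some $m \geq 1$. In the example above, $(1,\hat{0},1,\hat{1},1)$ is valid for $T_i$ (one checks $(2,1) \preceq (3)$), and $\hat{b}_i'$ is recovered from it by incrementing a single multiplicity, which is exactly what membership in $\mathcal{L}_0^+(T)$ demands. Your pruning step must therefore be enlarged to allow decrementing an upward entry, not merely deleting a non-upward $1$; once you add that case, the argument goes through as in the paper.
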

\begin{proof}
The second claim follows from the first, since it simply ignores list orderings. Since $T$ is linear, $T'$ will also be linear, and will consist of $T$ with an extra vertex in some intermediate path or with a generalized star that has an arm elongated by $1$. 

Consider $L \in \mathcal{L}_0(T)$ and the underlying superposed lists that generate it. In the first instance, we construct an extra column in the tabular form of the LSP and insert a single $1$ in the row corresponding to the path we've augmented. In the second, we add either two or three columns and then insert either $1 \thinspace \hat{0}$, $\hat{0} \thinspace 1$, or $\hat{0}\thinspace 1 \thinspace \hat{0}$ in the row corresponding to the generalized star we augmented. The three cases account for augmenting the beginning, end, or middle of the row respectively. By the sufficiency of the LSP, each of these lists is actually achieved by $T'$, so that $\mathcal{L}^1_0(T) \subseteq \mathcal{L}_0(T').$

Consider a multiplicity list $L' \in \mathcal{L}_0(T')$. If $T'$ was obtained from $T$ by augmenting an intermediate path, we simply delete a $1$ from the row corresponding to that path, which yields a valid superposition $L \in \mathcal{L}_0(T)$. The effect of adding this $1$ back in is to augment some element of $L$ by $1$, hence $L' \in \mathcal{L}^+_0(T)$. If we obtained $T'$ from $T$ by augmenting an arm in a generalized star, we can use the same reasoning but have multiple cases. In the row corresponding to the generalized star we augmented, either we remove $1 \thinspace \hat{0}$, $\hat{0} \thinspace 1, \hat{0}\thinspace 1 \thinspace \hat{0}$, or map $\widehat{m}\to\widehat{m}-1$ for some $m\geq 1$. This gives $L\in \mathcal{L}_0(T)$, and recovers $L'$ by augmenting some element of $L$ by $1$. Since these exhaust the possibilities for generating $T'$ from $T$, the claim follows.
\end{proof}

There are several basic functions studying extremal multiplicity lists that occur for a given tree. One such function is $U(T)$, which we let denote the minimum number of ones among the multiplicity lists of $T$. Note that $U(T)\geq 2$ for any tree with at least two vertices, since the largest and smallest eigenvalues of a matrix whose graph is a tree must occur with multiplicity 1 \cite{Johnson3}.
\begin{cor}
For any linear tree $T$, we have $$ U(T) \leq 2+D_2(T),$$
where $D_2(T)$ denotes the number of degree two vertices of $T$.
\end{cor}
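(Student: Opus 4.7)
My plan is to show that the Degree Conjecture multiplicity list itself already has exactly $D_2(T)+2$ ones, so the bound follows immediately. Since the Degree Conjecture has just been established for linear trees, the work is purely combinatorial vertex-counting.

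First I would invoke the preceding corollary to obtain the multiplicity list
\[
(d_1-1,\,d_2-1,\,\ldots,\,d_k-1,\,1,1,\ldots,1)\in\mathcal{L}(T),
\]
where $d_1,\ldots,d_k$ are the degrees of the HDVs of $T$. Let $s$ denote the number of trailing $1$'s in this list. Since the multiplicities of any list in $\mathcal{L}(T)$ sum to $n=|V(T)|$, we immediately have
\[
s \;=\; n \;-\; \sum_{i=1}^{k}(d_i-1).
\]
Thus $U(T)\le s$, and the whole corollary reduces to showing $s = D_2(T)+2$.

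Next I would partition the vertices of $T$ by degree. Writing $L$ for the number of leaves (degree-$1$ vertices), $D_2=D_2(T)$ for the number of degree-$2$ vertices, and $k$ for the number of HDVs, we have
\[
n \;=\; L+D_2+k.
\]
Applying the handshake identity $\sum_{v\in V(T)}\deg(v)=2(n-1)$ and separating contributions by degree class gives
\[
L + 2D_2 + \sum_{i=1}^{k} d_i \;=\; 2(n-1) \;=\; 2L+2D_2+2k-2,
\]
so $\sum_{i=1}^{k}(d_i-1) = L+k-2$. Substituting this back into the expression for $s$ yields $s = (L+D_2+k) - (L+k-2) = D_2+2$, which is exactly the claimed bound.

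There is no real obstacle here beyond carefully bookkeeping the three classes of vertices; all the substantive work was already carried out in establishing the sufficiency of the LSP and the consequent Degree Conjecture for linear trees. I would conclude by remarking that this bound is best possible in the presence of enough structural slack — e.g., for a path ($k=0$, $D_2=n-2$) the bound $U(T)\le n$ is trivially matched, while for a generalized star ($k=1$) it recovers the known sharp count — though a full discussion of sharpness is not needed for the statement as given.
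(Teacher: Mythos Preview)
Your argument is correct and follows the same route as the paper: both deduce the bound from the Degree Conjecture for linear trees, established in the preceding corollary. The only difference is presentational---the paper simply cites \cite{Johnson2} for the implication ``Degree Conjecture $\Rightarrow U(T)\le 2+D_2(T)$'', whereas you have written out the underlying handshake-identity count explicitly. One small point worth stating for completeness: since every HDV has degree at least $3$, each $d_i-1\ge 2$, so none of the leading entries of the Degree Conjecture list contribute additional $1$'s; hence the total number of $1$'s in the list is exactly your $s$, not merely bounded below by it.
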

\begin{proof}
The paper \cite{Johnson2} shows that this upper bound follows for any tree satisfying the Degree Conjecture.
\end{proof}
This is not a strict inequality, since a star on more than three vertices has no degree two vertices and achieves $U(T)=2$.

Another function studying extremal multiplicity lists is $M(T)$, which counts the maximum multiplicity of an eigenvalue in all multiplicity lists achievable by $T$. The following corollary is a direct generalization of the known statement \cite{Johnson3} $$M( LT(T_1,s,T_2)) = M(T_1) +M(T_2)+1$$ for $s\geq 1$. Contrast it with the easily shown equality $$M\left(\oplus_{i=1}^k T_i\right) =\sum_{i=1}^k M(T_i),$$
where each $T_i$ is a generalized star. 

\begin{cor}
For any linear tree $T=LT(T_1,s_1,\ldots, s_{k-1},T_k)$, let $l$ denote the number of non-empty intermediate paths $s_i$. Then we have $$M(T) =  l+\sum_{i=1}^k M(T_i). $$
\end{cor}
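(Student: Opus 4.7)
The plan is to derive matching bounds on $M(L)$ using the LSP tableau, which by the sufficiency theorem of Section \ref{sec3} faithfully parametrizes $\mathcal{L}_0(L)$.

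\textbf{Upper bound.} Let $(a_1,\ldots,a_\nu)\in\mathcal{L}_0(L)$. By the necessity half of the LSP, this list is the column-sum vector of a valid tableau completion, so for every column index $\ell$,
\[ a_\ell = \sum_{i=1}^k b_{i,\ell}^+ + \sum_{j=1}^{k-1} c_{j,\ell}^+. \]
Each $b_{i,\ell}^+$ is either a non-upward $0$ or an entry of the upward multiplicity list $\hat{b}_i\in\hat{\mathcal{L}}_v(T_i)$, so $b_{i,\ell}^+\leq M(T_i)$. Each $c_{j,\ell}^+\in\{0,1\}$, and $c_{j,\ell}^+=0$ whenever $s_j=0$ since then $\hat{c}_j$ is empty and $c_j^+$ consists entirely of non-upward zeros. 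Summing yields $a_\ell\leq l+\sum_{i=1}^k M(T_i)$, hence $M(L)\leq l+\sum_{i=1}^k M(T_i)$.

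\textbf{Lower bound.} I construct a tableau whose first column attains the upper bound. For each generalized star $T_i$, I first choose a list $\hat{b}_i^\ast\in\hat{\mathcal{L}}_v(T_i)$ in which the value $M(T_i)$ appears as an upward multiplicity. Such a list exists because, by part~(2) of the Parter--Weiner Theorem~\ref{pw}, any eigenvalue realizing $M(T_i)\geq 2$ is upward at a vertex of degree $\geq 3$; in a generalized star this can only be the unique HDV. Inserting non-upward zeros, I shift the upward $M(T_i)$-entry of $\hat{b}_i^\ast$ into the first column of $b_i^+$. For each $j$ with $s_j\geq 1$, I set $c_{j,1}^+=1$ (non-upward), distribute the remaining $s_j-1$ non-upward ones of $c_j^+$ into pairwise disjoint later columns, and pad all rows with non-upward zeros to a common length, arranging the trailing entries of each $\hat{b}_i^\ast$ in yet further disjoint columns. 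I then verify the four LSP conditions: (1) is immediate after padding; (2) is inherited from each $\hat{b}_i^\ast$ since only non-upward zeros were inserted; (3) holds because every column we used carries either a positive entry from some $\hat{b}_i^\ast$ or a non-upward $1$ from some $c_j^+$; and (4) holds in the first column since, read in the prescribed order $b_1^+,c_1^+,b_2^+,\ldots,b_k^+$, the entries alternate the upward values $M(T_i)$ with (possibly) non-upward $1$'s from the $c_j^+$'s, thereby separating any two non-upward $1$'s by an upward multiplicity, and later columns were designed to carry at most one non-upward $1$. The first column sums to $\sum_{i=1}^k M(T_i)+l$, so by the sufficiency of the LSP this ordered multiplicity list lies in $\mathcal{L}_0(L)$.

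\textbf{Main obstacle.} The conceptual crux is the Parter--Weiner input that $M(T_i)$ is realized by an \emph{upward} eigenvalue at the central vertex of each generalized star $T_i$; this is exactly what allows $M(T_i)$ to share a column with non-upward $1$'s from the flanking paths without violating condition (4). The remaining difficulty is purely combinatorial bookkeeping: spreading the surplus non-upward ones of the $c_j^+$'s and the trailing entries of the $\hat{b}_i^\ast$'s into non-interfering columns so that conditions (3) and (4) persist everywhere, which is straightforward once the first-column alignment is fixed.
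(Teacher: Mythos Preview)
Your approach mirrors the paper's: bound each row's contribution from above and then construct a single aligned column attaining the bound. The upper bound argument is fine. The construction, however, has a genuine error. You claim to ``shift the upward $M(T_i)$-entry of $\hat{b}_i^\ast$ into the first column of $b_i^+$'' and then place only the \emph{trailing} entries of $\hat{b}_i^\ast$ in later columns. But by condition~(2) of Theorem~1, an upward entry $q_j$ of any $\hat{b}_i^\ast\in\hat{\mathcal{L}}_v(T_i)$ must satisfy $1<j<r$; in particular the first entry of $\hat{b}_i^\ast$ is non-upward, so the upward $M(T_i)$ is never the first entry. Since forming $b_i^+$ from $\hat{b}_i^\ast$ allows only the insertion of non-upward zeros, entries can only move rightward; hence the upward $M(T_i)$ cannot land in column~$1$, and even if it could, the entries of $\hat{b}_i^\ast$ preceding it would have nowhere to go. Your construction simply drops these leading entries, which makes the resulting $b_i^+$ fail LSP condition~(2).

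The fix is exactly what the paper does: align the maximal entries in an \emph{interior} column (call it column $\ell_0$), and then spread the leading entries of each $\hat{b}_i^\ast$ into disjoint columns to the \emph{left} of $\ell_0$, just as you spread the trailing entries to the right. With that adjustment your verification of conditions~(1)--(4) goes through unchanged, and the rest of your argument (including the Parter--Weiner justification that $M(T_i)$ is realized as an upward multiplicity at the center) is correct and more explicit than the paper's own proof.
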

\begin{proof}
We can construct a column in the tabular form of the LSP which achieves an eigenvalue with this maximum multiplicity. Begin by fixing an eigenvalue $\lambda$, and then consider the column corresponding to it. Each of the $k$ $b_i^+$ rows has the maximal element $M(T_i)$, and each of the $l$ non-empty $c_i^+$ rows has maximum element $1$. We can align these local maxima under the column corresponding to $\lambda$, and then fill in the table to the left and right consistently (for instance, by having a single element in each column, while each row retains its original order). Because the contribution by each row is locally maximum, this is a global bound. By the sufficiency of the LSP conditions a matrix for $L$ achieving this bound actually exists, giving equality.
\end{proof}

\section{More Consequences: Diameter Minimality of Linear Trees}\label{sec5}
Closely related to $U(T)$ is $c(T)$, the minimum number of distinct eigenvalues that a matrix whose graph is $T$ must have. If $T$ is a path, $c(T)=n$, and if $T$ is a star we have $c(T)=3$. We have the bound $c(T)\geq d(T)$ \cite{Johnson3}, where $d(T)$ is the diameter of $T$, the length of the longest path between two vertices in $T$, measured in terms of vertices. If we have the equality $c(T)=d(T)$, we call $T$ \textit{diminimal} (for diameter minimal). While $c(T)\neq d(T)$ in general, every tree on $\leq 6$ vertices is diminimal. We demonstrate an analogous statement for linear trees.

A generalized star $T$ with arm lengths $(l_1,l_2,\ldots)$ (in decreasing order) has diameter $d(T)=l_1+l_2+1$, and is also diminimal. When $T$ has only two arms, we can associate a canonical multiplicity list with length $d(T)$ to $T$, which we call an \textit{optimal list}. The optimal list equals $(1,\hat{1},1,\ldots,1,\hat{1},1,\hat{0},1,\ldots,1,\hat{0},1)$, where we have $l_2$ upward $\hat{1}$'s and $l_1-l_2$ upward $\hat{0}$'s, so that the total list has length $2l_1+1$. Note than this is a valid multiplicity list, and contains $2l_1+1-(l_1-l_2)=l_1+l_2+1 = d(T)$ nonzero entries. An optimal list for a path of length $n$ will be $(1,0,1,0,\ldots,0,1)$, which has $n-1$ zeros and $n$ ones. 

\begin{thm}\cite{Buckley, Johnson3}\label{dimin}
Every linear tree $T$ is diminimal, i.e., $c(T)=d(T)$.
\end{thm}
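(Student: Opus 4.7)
The lower bound $c(T)\ge d(T)$ is standard: the induced diameter path $P$ of $T$ has $d(T)$ vertices, so for any $A\in\mathcal{S}(T)$ the principal submatrix $A[P]$ is an irreducible symmetric tridiagonal matrix, hence has exactly $d(T)$ distinct eigenvalues. Iterating Theorem \ref{interlacing} then forces $|\sigma(A)|\ge d(T)$.

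For the reverse inequality, by the sufficiency of the LSP proved in Section \ref{sec3}, it suffices to exhibit a valid LSP tabular form for $T = L(T_1, s_1, \ldots, s_{k-1}, T_k)$ with exactly $d(T)$ columns. The first step is to choose, for each generalized star $T_i$ with arm lengths $l_1^{(i)}\ge\cdots\ge l_{k_i}^{(i)}$, an ``optimal'' upward multiplicity list $\hat{b}_i$ generalizing the two-arm case defined just before Theorem \ref{dimin}: a list alternating non-upward $1$'s with $l_{k_i}^{(i)}$ upward $\widehat{k_i-1}$'s, followed by $l_{k_i-1}^{(i)}-l_{k_i}^{(i)}$ upward $\widehat{k_i-2}$'s, down through $l_1^{(i)}-l_2^{(i)}$ upward $\hat{0}$'s. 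Checking $\hat{b}_i$ against the generalized-star characterization in Section \ref{sec:background} is routine: sorting its upward multiplicities in decreasing order and adding one to each entry produces exactly the dual partition $(l_1^{(i)},\ldots,l_{k_i}^{(i)})^*$, so the majorization condition is tight; its total sum equals $n_i$, the non-upward $1$'s lie at odd positions (hence are not adjacent to any upward multiplicity), and its non-$\hat{0}$ entries number exactly $d(T_i)$.

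The second step is alignment: pad each $\hat{b}_i$ with additional $\hat{0}$'s and position the $s_j$ non-upward $1$'s of each $c_j^+$ so that all rows attain a common length equal to $d(T)$. The guiding principle is to put the $d(T)$ columns of the table in bijection with the $d(T)$ vertices of the diameter path $P$ of $T$: arm-tip vertices of $P$ supply ``tip columns,'' each spine HDV $v_i$ supplies a ``hub column'' in which the large upward multiplicities $\widehat{k_i-1},\ldots,\hat{2}$ from the different $b_i^+$'s overlap, and each spine intermediate vertex supplies a ``path column'' occupied by a non-upward $1$ of the relevant $c_j^+$. A case split on how the diameter is realized (endpoint arm of $T_1$ to endpoint arm of $T_k$, an arm of some interior $T_i$ to another arm, or two arms inside a single $T_{i^*}$) only changes where the tip columns concentrate, and in every case the column total is exactly $d(T)$ by direct counting.

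The main anticipated obstacle is condition (4) of Definition \ref{lspstatement}: within a single column, in the vertical ordering $b_1^+, c_1^+, b_2^+, \ldots, b_k^+$, any two non-upward $1$'s must be separated by an upward multiplicity. Clashes arise when, for instance, a trailing non-upward $1$ of $b_i^+$, the non-upward $1$ of $c_i^+$, and a leading non-upward $1$ of $b_{i+1}^+$ would all land in the same column. I would resolve these by a staggering argument: shift each $c_j^+$'s $1$'s into columns that already host an upward multiplicity from a neighboring $b_{i'}^+$, and insert compensating $\hat{0}$'s on the $b$-rows to preserve the common length. That this staggered arrangement still fits in $d(T)$ columns follows from the length accounting for each $\hat{b}_i$ together with the diameter-path/column bijection above. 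Sufficiency of the LSP then produces a matrix in $\mathcal{S}(T)$ with exactly $d(T)$ distinct eigenvalues, yielding $c(T)\le d(T)$ and hence $c(T)=d(T)$.
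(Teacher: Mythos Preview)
Your plan is essentially the paper's strategy---invoke $c(T)\ge d(T)$, then use sufficiency of the LSP to build a tabular superposition with exactly $d(T)$ columns---but you are missing one simplification that the paper exploits and that makes the rest routine. Before constructing any optimal lists, the paper first \emph{prunes every generalized star down to its two longest arms}, obtaining a linear tree $T'=L(T_1',s_1,\ldots,s_{k-1},T_k')$ with $d(T')=d(T)$. Diminimality of $T'$ then implies diminimality of $T$, because any valid $d(T')$-column superposition for $T'$ can be upgraded to one for $T$ simply by increasing some of the upward entries in each $b_i^+$ row (the extra arms only make the majorization constraint easier). After this reduction, each optimal list contains only $\hat 1$'s and $\hat 0$'s, and the paper's superposition becomes an explicit staircase (its Figures for the two cases) in which condition~(4) of Definition~\ref{lspstatement} is immediate: in every column the upward entries are literally $\hat 1$'s sitting between the non-upward $1$'s.

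By contrast, your first step builds the full optimal list for each $T_i$ directly, with upward entries $\widehat{k_i-1},\widehat{k_i-2},\ldots,\hat 0$. That list is correct and your length count is right, but it forces you to carry large upward multiplicities through the alignment, and your ``staggering argument'' for condition~(4) is where the real work hides. As written it is only a promise: you have not said which columns receive the $c_j^+$ ones, nor why the compensating $\hat 0$ insertions never push a row past $d(T)$ columns, nor why two non-upward $1$'s from $b_i^+$ and $b_{i+1}^+$ can always be separated when both stars contribute long arms. The paper's two-arm reduction is precisely the device that collapses this bookkeeping; without it your plan is plausible but incomplete. I would adopt the reduction and then your diameter-path/column bijection becomes essentially the paper's staircase picture, with the two cases (diameter inside one star versus spanning two stars plus the induced path) handled exactly as the paper does.
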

\begin{proof}
Since we already know $c(T)\geq d(T)$ for any tree, we simply need to construct a multiplicity list with length $d(T)$ to prove diminimality. 

We first reduce the problem to considering linear trees where each generalized star has $\leq 2$ arms. Given a linear tree $T = LT(T_1,s_1,\ldots,s_{k-1},T_k)$, we construct the associated tree $T' =  LT(T_1',s_1,\ldots,s_{k-1},T_k')$, where  $T_i'$ is obtained from $T_i$ by deleting every arm except the two longest. Since $d(T)$ can only depend on the two longest arms across every star and the intermediate paths, $d(T)=d(T')$. Given a multiplicity list $L\in\mathcal{L}_0(T')$ of length $d(T')$, we can construct a multiplicity list of the same length in $\mathcal{L}_0(T)$ by increasing some of the upward eigenvalues from each row corresponding to a generalized star, so that proving diminimality of $T'$ proves diminimality of $T$. 

We now consider two cases, depending on whether the diameter consists of two arms from the same generalized star, or from one arm from each of two stars and the induced connecting path. Note that some linear trees will satisfy both cases, such as the tree of Figure \ref{diam1}.
\begin{enumerate}
\item Consider the case when the diameter of the linear tree $T=LT(T_1,s_1,\ldots,s_{k-1},T_k)$ consists of two arms from the same generalized star. Construct optimal lists for every star and path component, and superpose our optimal lists as in Figure \ref{super1}, where the row in blue corresponds to the generalized star whose arms form the path inducing the diameter of $T$.

\begin{center}
\begin{figure}[H]
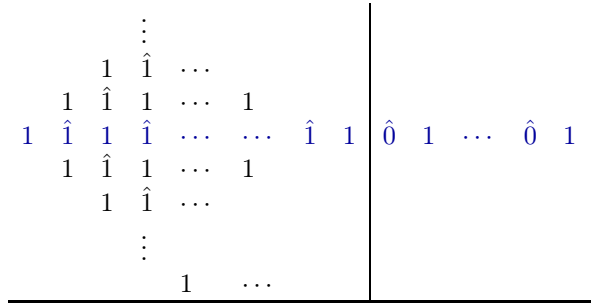

\begin{tabular}{llllllll|lllll}
                         &                                  &                          & $\vdots$                         &                                 &                                 &                                  &                          &                                  &                          &                                 &                                  &                          \\
                         &                                  & 1                        & $\hat{1}$                        & $\cdots$                        &                                 &                                  &                          &                                  &                          &                                 &                                  &                          \\
                         & 1                                & $\hat{1}$                & 1                                & $\cdots$                        & 1                               &                                  &                          &                                  &                          &                                 &                                  &                          \\
{\color[HTML]{00009B} 1} & {\color[HTML]{00009B} $\hat{1}$} & {\color[HTML]{00009B} 1} & {\color[HTML]{00009B} $\hat{1}$} & {\color[HTML]{00009B} $\cdots$} & {\color[HTML]{00009B} $\cdots$} & {\color[HTML]{00009B} $\hat{1}$} & {\color[HTML]{00009B} 1} & {\color[HTML]{00009B} $\hat{0}$} & {\color[HTML]{00009B} 1} & {\color[HTML]{00009B} $\cdots$} & {\color[HTML]{00009B} $\hat{0}$} & {\color[HTML]{00009B} 1} \\
                         & 1                                & $\hat{1}$                & 1                                & $\cdots$                        & 1                               &                                  &                          &                                  &                          &                                 &                                  &                          \\
                         &                                  & 1                        & $\hat{1}$                        & $\cdots$                        &                                 &                                  &                          &                                  &                          &                                 &                                  &                          \\
                         &                                  &                          & $\vdots$                         &                                 &                                 &                                  &                          &                                  &                          &                                 &                                  &                          \\
                         &                                  &                          &                                  & 1                               & $\cdots$                        &                                  &                          &                                  &                          &                                 &                                  &                          \\ \hline
\end{tabular}
\caption{The superposition in Case 1}
\label{super1}
\end{figure}
\end{center}

For a concrete example, consider the tree of Figure \ref{diam1}. Note that our construction forms a valid superposition since each column alternates upward and non-upward multiplicities. Furthermore, the resulting list still has diameter $d(T)$ since we only augment nonzero entries from the optimal list for $T_i$. If we are $j$ rows below (or above) the longest row (which corresponds to $T_i$), there are still $l_2-j$ upward multiplicities which are at the bottom (or top) of a column. A generalized star at this row must have a longest arm of length $\leq l_2-j$, else we could find a larger diameter which includes a portion of this star. Similarly, any path at this row must have $\leq l_2-j$ vertices. Therefore, the optimal list will always terminate to the left of the indicated line, and our superposed list will have as many nonzero entries as the longest row, which is exactly $d(T)$!

\begin{figure}[H]
\centering
\def\r{1.5}
\def\k{6pt}
\begin{tikzpicture}[scale=0.7,
bor/.style={circle, draw, only marks, mark=*, fill=white,mark size=\k},
rel/.style={rectangle,draw,rounded corners=0.6ex,  fill=white, minimum size=\k},
coo/.style={circle, draw, only marks, mark=*, fill=blue, minimum size=\k+3pt},
]
\draw (-\r,0) node[bor]{}-- (0,0) node[coo]{}-- (0,-\r)node[coo]{}-- (0,-2*\r)node[coo]{}-- (0,3*\r)node[coo]{} --(0,\r)node[coo]{} --(0,2*\r)node[coo]{} --(0,0) -- (\r,0)node[bor]{};
\foreach \x/\xtext in {2,6}{
\draw[shift={(-\r,0)}] (0pt,0pt) node[bor]{} -- ({\r*cos(\x*pi/4 r)}, {\r*sin(\x*pi/4 r)})
node[bor]{};}
\foreach \x/\xtext in {-2,2}{
\draw[shift={(\r,0)}] (0pt,0pt) node[bor]{}-- ({\r*cos(\x*pi/4 r)}, {\r*sin(\x*pi/4 r)})
node[bor]{};
}
\end{tikzpicture}
\hspace{2cm}
\begin{tabular}{lllll|ll}
  & 1         & $\hat{1}$ & 1         &   &           &   \\
{\color{blue}1} & {\color{blue}$\hat{1}$} & {\color{blue}1}         & {\color{blue}$\hat{1}$}& {\color{blue}1} & {\color{blue}$\hat{0}$} & {\color{blue}1} \\
  & 1         & $\hat{1}$ & 1         &   &           &   \\ \hline
1 & 3         & 3        & 3         & 1 & 0         & 1
\end{tabular}
\caption{An example of Case 1 (diameter in blue)}
\label{diam1}
\end{figure}
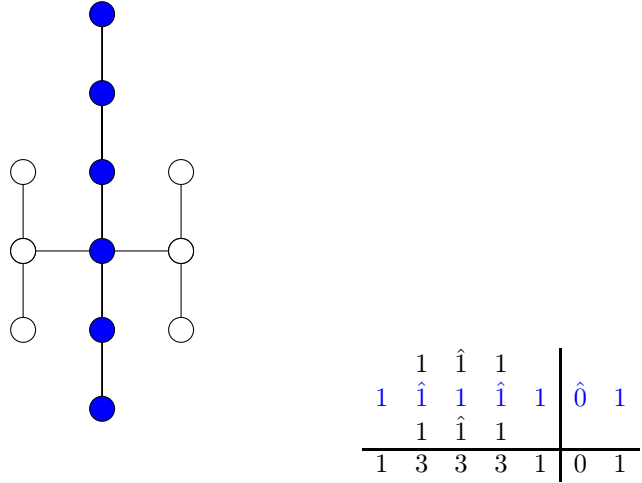

\item Consider the case where the diameter of our linear tree includes components from two generalized stars, and the induced path between them. Let the generalized star with the longest arm in the entire linear tree have arms labelled by $l_1$ and $l_2$, with $l_1\geq l_2$. Let the other generalized star with an arm contributing to the diameter have arms labelled by $m_1$ and $m_2$, with $m_1\geq m_2$. If the induced intermediate path has length $n$, then the diameter of our tree is $n+l_1+m_1+2$. 

As before, we construct optimal lists for each star and path component of our linear tree. We then superpose these optimal lists as in Figure \ref{fig53}, where the blue rows correspond to generalized stars contributing to the diameter, and the green rows correspond to the intermediate path. The vertical line denotes the point after which both blue lists do not contain any $\hat{1}$'s. After superposing the blue and green rows, the multiplicity list will have length $\leq 2+ n+l_1+l_2 + (m_1 - l_2)=d(T)$, since each of the $n$ green rows adds one to the length of the multiplicity list, while leaving the number of upward eigenvalues we can superpose with unchanged. Therefore, at each step we are still able to superpose, and superposing the last row augments at most $m_1-l_2$ columns which originally summed to $0$. Superposing the black rows above and below will not increase the length of the multiplicity by identical reasoning to the previous case; any row which extends past the vertical black line implies the existence of a sufficiently long branch  contradicting the maximality of the diameter. Figure \ref{diam2} gives an explicit example.

\begin{figure}[H]
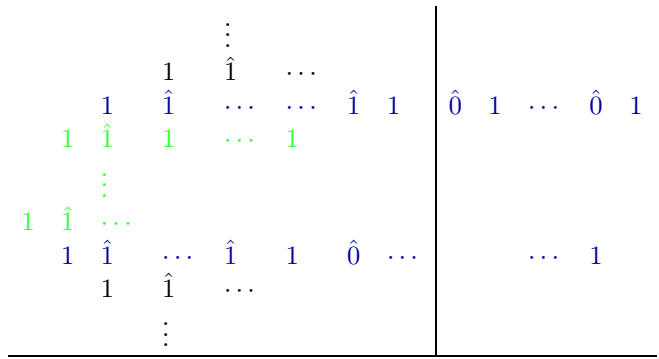

\begin{tabular}{llllllll|lllll}
                         &                                  &                                  &                                  & $\vdots$                         &                                 &                                  &                                 &                                  &                          &                                 &                                  &                          \\
                         &                                  &                                  & 1                                & $\hat{1}$                        & $\cdots$                        &                                  &                                 &                                  &                          &                                 &                                  &                          \\
{\color[HTML]{00009B} }  & {\color[HTML]{00009B} }          & {\color[HTML]{00009B} 1}         & {\color[HTML]{00009B} $\hat{1}$} & {\color[HTML]{00009B} $\cdots$}  & {\color[HTML]{00009B} $\cdots$} & {\color[HTML]{00009B} $\hat{1}$} & {\color[HTML]{00009B} 1}        & {\color[HTML]{00009B} $\hat{0}$} & {\color[HTML]{00009B} 1} & {\color[HTML]{00009B} $\cdots$} & {\color[HTML]{00009B} $\hat{0}$} & {\color[HTML]{00009B} 1} \\
{\color[HTML]{34FF34} }  & {\color[HTML]{34FF34} 1}         & {\color[HTML]{34FF34} $\hat{1}$} & {\color[HTML]{34FF34} 1}         & {\color[HTML]{34FF34} $\cdots$}  & {\color[HTML]{34FF34} 1}        & {\color[HTML]{34FF34} }          & {\color[HTML]{34FF34} }         & {\color[HTML]{34FF34} }          & {\color[HTML]{34FF34} }  & {\color[HTML]{34FF34} }         & {\color[HTML]{34FF34} }          & {\color[HTML]{34FF34} }  \\
{\color[HTML]{34FF34} }  & {\color[HTML]{34FF34} }          & {\color[HTML]{34FF34} $\vdots$}  & {\color[HTML]{34FF34} }          & {\color[HTML]{34FF34} }          & {\color[HTML]{34FF34} }         & {\color[HTML]{34FF34} }          & {\color[HTML]{34FF34} }         & {\color[HTML]{34FF34} }          & {\color[HTML]{34FF34} }  & {\color[HTML]{34FF34} }         & {\color[HTML]{34FF34} }          & {\color[HTML]{34FF34} }  \\
{\color[HTML]{34FF34} 1} & {\color[HTML]{34FF34} $\hat{1}$} & {\color[HTML]{34FF34} $\cdots$}  & {\color[HTML]{34FF34} }          & {\color[HTML]{34FF34} }          & {\color[HTML]{34FF34} }         & {\color[HTML]{34FF34} }          & {\color[HTML]{34FF34} }         & {\color[HTML]{34FF34} }          & {\color[HTML]{34FF34} }  & {\color[HTML]{34FF34} }         & {\color[HTML]{34FF34} }          & {\color[HTML]{34FF34} }  \\
{\color[HTML]{00009B} }  & {\color[HTML]{00009B} 1}         & {\color[HTML]{00009B} $\hat{1}$} & {\color[HTML]{00009B} $\cdots$}  & {\color[HTML]{00009B} $\hat{1}$} & {\color[HTML]{00009B} 1}        & {\color[HTML]{00009B} $\hat{0}$} & {\color[HTML]{00009B} $\cdots$} & {\color[HTML]{00009B} }          & {\color[HTML]{00009B} }  & {\color[HTML]{00009B} $\cdots$} & {\color[HTML]{00009B} 1}         & {\color[HTML]{00009B} }  \\
                         &                                  & 1                                & $\hat{1}$                        & $\cdots$                         &                                 &                                  &                                 &                                  &                          &                                 &                                  &                          \\
                         &                                  &                                  & $\vdots$                         &                                  &                                 &                                  &                                 &                                  &                          &                                 &                                  &                          \\ \hline
\end{tabular}
\caption{The superposition of Case 2}
\label{fig53}
\end{figure}

\begin{figure}[H]
\centering
\def\r{1.5}
\def\k{6pt}
\begin{tikzpicture}[scale=0.7,
bor/.style={circle, draw, only marks, mark=*, fill=white,mark size=\k},
rel/.style={circle, draw, only marks, mark=*, fill=green, minimum size=\k+3pt},
coo/.style={circle, draw, only marks, mark=*, fill=blue, minimum size=\k+3pt},
]
\draw (-\r,0) node[bor]{}-- (0,0) node[coo]{}-- (0,-\r)node[bor]{}  --(0,\r)node[coo]{} --(0,2*\r)node[coo]{} --(0,0) -- (\r,0)node[rel]{} --(2*\r,0)node[bor]{};
\foreach \x/\xtext in {2,6}{
\draw[shift={(-\r,0)}] (0pt,0pt) node[bor]{} -- ({\r*cos(\x*pi/4 r)}, {\r*sin(\x*pi/4 r)})
node[bor]{};}
\foreach \x/\xtext in {-2}{
\draw[shift={(2*\r,0)}] (0pt,0pt) node[coo]{}-- ({\r*cos(\x*pi/4 r)}, {\r*sin(\x*pi/4 r)})
node[bor]{};
}
\foreach \x/\xtext in {2}{
\draw[shift={(2*\r,0)}] (0pt,0pt) node[coo]{}-- ({\r*cos(\x*pi/4 r)}, {\r*sin(\x*pi/4 r)})
node[coo]{};
}
\end{tikzpicture}
\hspace{2cm}
\begin{tabular}{lllll|l}
                         &                          & 1                                & $\hat{1}$                        & 1                                &                          \\
{\color[HTML]{00009B} }  & {\color[HTML]{00009B} 1} & {\color[HTML]{00009B} $\hat{1}$} & {\color[HTML]{00009B} 1}         & {\color[HTML]{00009B} $\hat{0}$} & {\color[HTML]{00009B} 1} \\
{\color[HTML]{34FF34} 1} & {\color[HTML]{34FF34} }  & {\color[HTML]{34FF34} }          & {\color[HTML]{34FF34} }          & {\color[HTML]{34FF34} }          & {\color[HTML]{34FF34} }  \\
{\color[HTML]{00009B} }  & {\color[HTML]{00009B} }  & {\color[HTML]{00009B} 1}         & {\color[HTML]{00009B} $\hat{1}$} & {\color[HTML]{00009B} 1}         & {\color[HTML]{00009B} }  \\ \hline
1                        & 1                        & 3                                & 3                                & 2                                & 1                       
\end{tabular}
\caption{An example of Case 2 (diameter in blue and induced path in green)}
\label{diam2}
\end{figure}
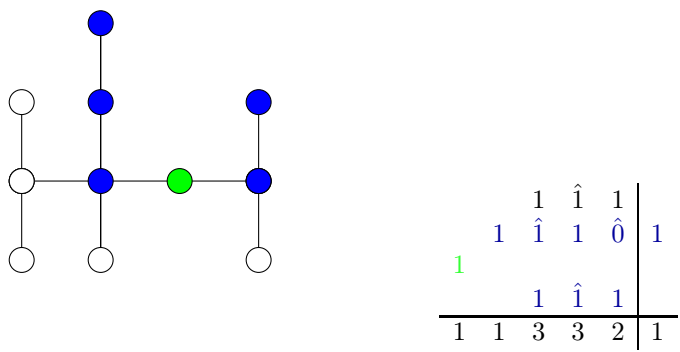
\end{enumerate}

\end{proof}

\section*{Acknowledgements}
The authors gratefully acknowledge support from 2018 National Science Foundation grant DMS \#0751964 that supported the work reported herein. Tanay Wakhare would also like to thank Charles Johnson and Roberto Costas-Santos for providing a very supportive environment at the 2018 William \& Mary Matrix Analysis REU, where this work was conducted.

\end{document}